\newlength{\aufzleft}
\newenvironment{aufz}{\begin{list}{}{\setlength{\listparindent}{0pt}\setlength{\itemsep}{\topsep}\setlength{\labelwidth}{3.2ex}\setlength{\aufzleft}{\labelsep}\addtolength{\aufzleft}{\labelwidth}\setlength{\leftmargin}{\aufzleft}}}{\end{list}}
\newenvironment{equi}{\begin{list}{}{\setlength{\listparindent}{0pt}\setlength{\itemsep}{\topsep}\setlength{\labelwidth}{4.1ex}\setlength{\aufzleft}{\labelsep}\addtolength{\aufzleft}{\labelwidth}\setlength{\leftmargin}{\aufzleft}}}{\end{list}}
\newtheoremstyle{bracket}{1ex}{2ex}{\rm}{}{\bfseries}{}{0.8em}{\thmnumber{(#2)}}
\newtheoremstyle{thm}{1ex}{2ex}{\itshape}{}{\bfseries}{}{0.9em}{\thmnumber{(#2) }\thmname{#1}\thmnote{ #3}}
\newtheoremstyle{example}{1ex}{2ex}{\rm}{}{\bfseries}{}{0.8em}{\thmnumber{(#2)}\thmname{ #1}}
\theoremstyle{bracket}
\newtheorem{no}{}[section]
\theoremstyle{thm}
\newtheorem{prop}[no]{Proposition}
\newtheorem{lemma}[no]{Lemma}
\newtheorem{cor}[no]{Corollary}
\newtheorem{thm}[no]{Theorem}
\newtheorem*{thmno}{Theorem}
\theoremstyle{example}
\newtheorem{qu}[no]{Questions}
\newcommand{\dfgl}{\mathrel{\mathop:}=}
\newcommand{\N}{\mathbbm{N}}
\newcommand{\Z}{\mathbbm{Z}}
\newcommand{\Q}{\mathbbm{Q}}
\newcommand{\F}{\mathbbm{F}}
\DeclareMathOperator{\ke}{Ker}
\DeclareMathOperator{\inte}{Int}
\DeclareMathOperator{\cinte}{CInt}
\DeclareMathOperator{\degsupp}{degsupp}
\DeclareMathOperator{\nzd}{Nzd}
\newcommand{\snf}{\renewcommand{\thefootnote}{*}\footnotetext{The author was supported by the Swiss National Science Foundation.}}
\begin{document}

\title{Graded integral closures\snf}
\author{Fred Rohrer}
\address{Universit\"at T\"ubingen, Fachbereich Mathematik, Auf der Morgenstelle 10, 72076 T\"u\-bingen, Germany}
\email{rohrer@mail.mathematik.uni-tuebingen.de}
\subjclass[2010]{Primary 13B22; Secondary 13A02, 16S34}
\keywords{Graded ring, coarsening, integral closure, integrally closed ring, complete integral closure, completely integrally closed ring, group algebra}

\begin{abstract}
It is investigated how graded variants of integral and complete integral closures behave under coarsening functors and under formation of group algebras.
\end{abstract}

\maketitle


\section*{Introduction}

Let $G$ be a group, let $R$ be a $G$-graded ring, and let $S$ be a $G$-graded $R$-algebra. (Throughout, monoids, groups and rings are understood to be commutative, and algebras are understood to be commutative, unital and associative.) We study a graded variant of (complete) integral closure, defined as follows: We denote by $\inte(R,S)$ (or $\cinte(R,S)$, resp.) the $G$-graded sub-$R$-algebra of $S$ generated by the {\em homogeneous} elements of $S$ that are (almost) integral over $R$ and call this the (complete) integral closure of $R$ in $S$. If $R$ is entire (as a $G$-graded ring, i.e., it has no {\em homogeneous} zero-divisors) we consider its graded field of fractions $Q(R)$, i.e., the $G$-graded $R$-algebra obtained by inverting all nonzero {\em homogeneous} elements, and then $\inte(R)=\inte(R,Q(R))$ (or $\cinte(R)=\cinte(R,Q(R))$, resp.) is called the (complete) integral closure of $R$. These constructions behave similar to their ungraded relatives, as long as we stay in the category of $G$-graded rings. But the relation between these constructions and their ungraded relatives, and more generally their behaviour under coarsening functors, is less clear; it is the main object of study in the following.

For an epimorphism of groups $\psi\colon G\twoheadrightarrow H$ we denote by $\bullet_{[\psi]}$ the $\psi$-coarsening functor from the category of $G$-graded rings to the category of $H$-graded rings. We ask for conditions ensuring that $\psi$-coarsening commutes with relative (complete) integral closure, i.e., $\inte(R,S)_{[\psi]}=\inte(R_{[\psi]},S_{[\psi]})$ or $\cinte(R,S)_{[\psi]}=\cinte(R_{[\psi]},S_{[\psi]})$, or -- if $R$ and $R_{[\psi]}$ are entire -- that $\psi$-coarsening commutes with (complete) integral closure, i.e., $\inte(R)_{[\psi]}=\inte(R_{[\psi]})$ or $\cinte(R)_{[\psi]}=\cinte(R_{[\psi]})$. Complete integral closure being a more delicate notion than integral closure, it is not astonishing that the questions concerning the former are harder than the other ones. Furthermore, the case of integral closures of entire graded rings in their graded fields of fractions turns out to be more complicated than the relative case, because $Q(R)_{[\psi]}$ almost never equals $Q(R_{[\psi]})$, hence in addition to the coarsening we also change the overring in which we form the closure.

The special case $H=0$ of parts of these questions was already studied by several authors. Bourbaki (\cite[V.1.8]{ac}) treats torsionfree groups $G$, Van Geel and Van Oystaeyen (\cite{vgvo}) consider $G=\Z$, and Swanson and Huneke (\cite[2.3]{sh}) discuss the case that $G$ is of finite type. Our main results, generalising these partial results, are as follows.

\begin{thmno}[1]
Let $\psi\colon G\twoheadrightarrow H$ be an epimorphism of groups and let $R$ be a $G$-graded ring.

a) If $\ke(\psi)$ is contained in a torsionfree direct summand of $G$ then $\psi$-coarsening commutes with relative (complete) integral closure.

b) Suppose that $R$ is entire. If $G$ is torsionfree, or if $\ke(\psi)$ is contained in a torsionfree direct summand of $G$ and the degree support of $R$ generates $G$, then $\psi$-coarsening commutes with integral closure.
\end{thmno}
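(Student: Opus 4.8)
The plan is to derive both parts from one mechanism: a degree-twisting homomorphism that converts the coarsened grading back into a totally ordered one, where a leading-term argument is available. Throughout I use that $R_{[\psi]}$ and $S_{[\psi]}$ have the \emph{same underlying rings} as $R$ and $S$, so that (almost) integrality of an element over $R$ is literally the same condition as over $R_{[\psi]}$; only the homogeneity changes. In particular the inclusions $\inte(R,S)_{[\psi]}\subseteq\inte(R_{[\psi]},S_{[\psi]})$ and $\cinte(R,S)_{[\psi]}\subseteq\cinte(R_{[\psi]},S_{[\psi]})$ are automatic, since every $G$-homogeneous element is $H$-homogeneous. Thus a) reduces to the reverse inclusions, which reduce to the key statement: if $x\in S$ is (almost) integral over $R$ and its $G$-support lies in a single coset of $K\dfgl\ke(\psi)$, then each $G$-homogeneous component of $x$ is (almost) integral over $R$.

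To prove this I would write $K\subseteq G'$ with $G=G'\oplus G''$ and $G'$ torsionfree, and choose a homomorphism $\lambda\colon G\to\Lambda$ into a totally ordered abelian group that is injective on $K$ (project onto $G'$ and embed $G'$ into the ordered $\Q$-vector space $G'\otimes_{\Z}\Q$). Form the $\Lambda$-graded group algebra $R[\Lambda]\subseteq S[\Lambda]$ and the ring homomorphism $\theta\colon S\to S[\Lambda]$ sending $s\in S_g$ to $s\,t^{\lambda(g)}$. As $\lambda$ is a homomorphism, $\theta$ is multiplicative and carries an integral (resp.\ almost integral) equation for $x$ over $R$ to one for $\theta(x)$ over $R[\Lambda]$; in the almost-integral case $\theta$ maps a finitely generated $R$-module containing the powers of $x$ into a finitely generated $R[\Lambda]$-module containing the powers of $\theta(x)$. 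Since the support of $x$ lies in one $K$-coset and $\lambda$ is injective on $K$, the element $\theta(x)$ has finite support with pairwise distinct $\Lambda$-degrees, and its $\Lambda$-homogeneous components are exactly the $t^{\lambda(g)}s_g$. The totally ordered base case (the leading-term induction: the top $\Lambda$-component of an (almost) integral element is (almost) integral, then subtract and recurse) shows each $t^{\lambda(g)}s_g$ is (almost) integral over $R[\Lambda]$; as $t^{\lambda(g)}$ is a unit, so is $s_g$. Finally the augmentation $S[\Lambda]\to S$, $t^{\mu}\mapsto1$, fixes $R$ and each $s_g$ and descends (almost) integrality to $R$. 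This yields a). The hard part will be the complete-integral variant: one must verify that finite generation survives $\theta$ and the augmentation, and that the leading-term induction is valid for almost integrality over a totally ordered grading group.

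For b) the difficulty is that the ambient rings differ. I would first check that $R_{[\psi]}$ is again entire: for $H$-homogeneous $u,v$ the leading $\Lambda$-terms of $\theta(u),\theta(v)$ are $G$-homogeneous, so their product is nonzero by entirety of $R$, whence $uv\neq0$. Since every nonzero $G$-homogeneous element of $R$ is a nonzerodivisor, it is inverted in $Q(R_{[\psi]})$, giving a graded inclusion $Q(R)_{[\psi]}\subseteq Q(R_{[\psi]})$ that is usually strict -- which is the crux. Writing $k_G\dfgl Q(R)_0$, $k_H\dfgl Q(R_{[\psi]})_0$ and $K_0\dfgl K\cap\langle\degsupp(R)\rangle$, a short computation (clearing denominators against one nonzero $G$-homogeneous summand of a denominator) should identify $k_H$ with the field of fractions of the degree-zero subring $\bigoplus_{k\in K_0}Q(R)_k$; here $K_0$ is torsionfree under either hypothesis (it is torsionfree when $G$ is, and when $K\subseteq G'$), so this subring is a normal domain.

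It then remains to show that an $H$-homogeneous $y\in Q(R_{[\psi]})$ integral over $R$ already lies in $Q(R)$; the key statement from a), applied to the $G$-graded extension $R\subseteq Q(R)$, places it in $\inte(R,Q(R))$, giving the reverse inclusion and hence $\inte(R)_{[\psi]}=\inte(R_{[\psi]})$. Because $\degsupp(R)$ generates $G$ (or $G$ is torsionfree, in which case $\deg_H y\in\psi(\langle\degsupp(R)\rangle)$ automatically when $y\neq0$), $Q(R)$ contains a homogeneous unit $z$ in a degree mapping to $\deg_H y$, so $c\dfgl yz^{-1}\in k_H$ is integral over $Q(R)$; taking the $H$-degree-zero part of a monic equation for $c$ shows $c$ is integral over the normal domain $\bigoplus_{k\in K_0}Q(R)_k$, of which $k_H$ is the fraction field, whence $c$ lies in it and $y=cz\in Q(R)$. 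The main obstacle in b) is exactly this overring comparison: identifying $k_H$ as the fraction field of a normal domain and exploiting normality. The torsionfree kernel guarantees that normality, and the generation hypothesis supplies the homogeneous unit used to reduce to degree zero.
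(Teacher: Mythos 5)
Your architecture for a) is essentially the paper's: your twist $\theta\colon S\to S[\Lambda]$ is a one-step version of the embedding $j^{\pi}_S\colon S_{[\psi]}\to S_{[\psi]}[\ke(\psi)]_{[H]}$ of \ref{a110} followed by Theorem \ref{a90}, and requiring $\lambda$ to be injective only on $\ke(\psi)$ neatly absorbs the paper's separate reduction from ``$\ke(\psi)$ contained in a torsionfree summand'' to ``$\ke(\psi)$ itself a summand''. But the ``totally ordered base case'' you lean on is false as stated for \emph{integral} elements, and you locate the difficulty exactly backwards. For \emph{almost} integral $x$ the leading-term induction is valid: in a totally ordered group the only way to write $k\delta$ as a sum of $k$ degrees $\leq\delta$ is $\delta+\cdots+\delta$, so $(x^k)_{k\delta}=x_\delta^k$ and all powers of the top component lie in the finite module spanned by the homogeneous components of a fixed generating set; this is Gilmer--Heinzer's argument \cite[Proposition 1]{gilhei} and is exactly how the paper proves the complete-integral half of \ref{a90}. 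For \emph{integral} $x$, however, taking the degree-$m\delta$ component of $x^m+a_1x^{m-1}+\cdots+a_m=0$ yields $x_\delta^m+\sum_{i}\sum_{\mu\geq i\delta}a_{i,\mu}(x^{m-i})_{m\delta-\mu}=0$, and the terms with $\mu>i\delta$ are mixed products involving \emph{lower} components of $x$, not powers of $x_\delta$: no monic equation for $x_\delta$ drops out, and the module extraction above only shows that $x_\delta$ is \emph{almost} integral. The statement you need is true, but requires a different mechanism: either Bourbaki's theorem on integral closure in polynomial extensions \cite[V.1.3 Proposition 12]{ac} plus localization to Laurent algebras and a direct-limit argument (the paper's route in the proof of \ref{a90}), or a Noetherian repair (replace $R$ by the graded subring generated by the homogeneous components of the $a_i$ --- a finitely generated $\Z$-algebra, hence noetherian, over which almost integral equals integral by \ref{a10}). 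Without one of these, the integral half of a), and everything in b) resting on it, is open.

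For b) your route is genuinely different from the paper's (\ref{4.600}, \ref{4.700}, \ref{lem50}): instead of identifying the coarsening of a simple graded ring globally with a coarsely graded group algebra, you reduce to degree zero, and your identification of $k_H=Q(R_{[\psi]})_0$ with the field of fractions of $L_0\dfgl\bigoplus_{k\in K_0}Q(R)_k$ is correct (clearing denominators against a single homogeneous component works because nonzero homogeneous components of elements of $Q(R)$ are units), as are the extraction of a degree-zero monic equation and the use of a homogeneous unit $z$ supplied by the support hypothesis (or, in case i), by $\degsupp Q(R_{[\psi]})=\psi(\langle\degsupp R\rangle_{\Z})$). The gap is the clause ``so this subring is a normal domain'': torsionfreeness of $K_0$ alone is not a proof. $L_0$ is a simple $K_0$-graded ring of full support, and the assertion that its underlying ring is an integrally closed domain is precisely the nontrivial core of the paper's \ref{4.700}: one trivializes the grading by choosing homogeneous units over a basis (\ref{lem50}, after the limit reduction \ref{lem3} to $K_0$ free of finite type), obtaining the coarse group algebra $Q(R)_0[K_0]_{[0]}$, whose normality rests on graded Euclidean division (\ref{f20}, via \ref{f10}) or on \cite[\S 12]{gil}. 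So this step is true but is exactly what remains to be proved; citing or reproving that ingredient, together with the repair of a) above, would close your argument.
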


The questions above are closely related to the question of how (complete) integral closure behaves under formation of group algebras. If $F$ is a group, there is a canonical $G\oplus F$-graduation on the algebra of $F$ over $R$; we denote the resulting $G\oplus F$-graded ring by $R[F]$. We ask for conditions ensuring that formation of graded group algebras commutes with relative (complete) integral closure, i.e., $\inte(R,S)[F]=\inte(R[F],S[F])$ or $\cinte(R,S)[F]=\cinte(R[F],S[F])$, or -- if $R$ is entire -- that formation of graded group algebras commutes with (complete) integral closure, i.e., $\inte(R)[F]=\inte(R[F])$ or $\cinte(R)[F]=\cinte(R[F])$. Our main results are the following.

\begin{thmno}[2]
Let $G$ be a group and let $R$ be a $G$-graded ring. Formation of graded group algebras over $R$ commutes with relative (complete) integral closure. If $R$ is entire then formation of graded group algebras over $R$ commutes with (complete) integral closure.
\end{thmno}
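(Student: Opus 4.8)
The plan is to reduce every assertion to one statement about homogeneous elements: for a homogeneous $s\in S$, regarded inside $S[F]$ via $s\mapsto s e_0$, the element $s$ is integral (resp.\ almost integral) over $R[F]$ if and only if it is integral (resp.\ almost integral) over $R$. First I would record the structural facts about the $G\oplus F$-graduations: the homogeneous elements of $S[F]$ are exactly the products $s e_f$ with $s\in S$ homogeneous and $f\in F$, and each $e_f$ is a homogeneous unit of $R[F]$ with inverse $e_{-f}$. Since multiplication by an element of the base ring preserves integrality (every element of $A[x]$ is integral over $A$ once $x$ is) and almost integrality (if $x$ is almost integral then $A[ux]\subseteq A[x]$ lies in a finitely generated $A$-module), the units $e_{\pm f}\in R[F]$ show that $s e_f$ is (almost) integral over $R[F]$ iff $s$ is. This isolates the content in the iff-statement for $s\in S$.

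For that statement, the direction ``over $R$ $\Rightarrow$ over $R[F]$'' is base change along $R\hookrightarrow R[F]$: a monic equation, resp.\ a finitely generated $R$-module $M\subseteq S$ with $R[s]\subseteq M$, yields the same equation over $R[F]$, resp.\ the finitely generated $R[F]$-module $M\cdot R[F]\subseteq S[F]$. The converse is the heart of the matter, and the key device is the augmentation homomorphism $\varepsilon\colon S[F]\to S$, $\sum_f s_f e_f\mapsto\sum_f s_f$, which is a ring homomorphism restricting to $\varepsilon\colon R[F]\to R$ and to the identity on $S=S e_0$. Applying $\varepsilon$ to an integral equation $s^n+\sum_{i<n}a_i s^i=0$ with $a_i\in R[F]$ gives $s^n+\sum_{i<n}\varepsilon(a_i)s^i=0$ with $\varepsilon(a_i)\in R$, so $s$ is integral over $R$; applying $\varepsilon$ to a containment $R[F][s]\subseteq N$ with $N=\sum_j R[F]n_j$ finitely generated gives $R[s]=\varepsilon(R[F][s])\subseteq\varepsilon(N)=\sum_j R\,\varepsilon(n_j)$, a finitely generated $R$-submodule of $S$, so $s$ is almost integral over $R$. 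I expect this converse, namely the fact that the extra invertible variables $e_f$ cannot help an element of $S$ become (almost) integral, to be the main obstacle, and the augmentation is exactly what rules it out.

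With the iff-statement in hand the relative assertions follow by comparing generators. The homogeneous generators of $\inte(R[F],S[F])$ (resp.\ $\cinte(R[F],S[F])$) are the elements $s e_f$ with $s\in S$ homogeneous and (almost) integral over $R$, i.e.\ $s\in\inte(R,S)$ (resp.\ $s\in\cinte(R,S)$); as the $e_f$ already lie in $R[F]$, the sub-$R[F]$-algebra they generate is $\inte(R,S)\cdot R[F]=\inte(R,S)[F]$ (resp.\ $\cinte(R,S)[F]$). Hence $\inte(R,S)[F]=\inte(R[F],S[F])$ and $\cinte(R,S)[F]=\cinte(R[F],S[F])$.

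For the absolute statements I would first check that $R[F]$ is entire as a $G\oplus F$-graded ring when $R$ is, since $(r e_f)(r'e_{f'})=rr'e_{f+f'}$ vanishes only if $rr'=0$. Then I would establish the natural identification $Q(R[F])=Q(R)[F]$: since the $e_f$ are already units, inverting the nonzero homogeneous elements $r e_f$ of $R[F]$ amounts to inverting the nonzero homogeneous elements of $R$, so $Q(R[F])$ is the localization of $R[F]$ at the homogeneous non-zero-divisors of $R$, which equals $Q(R)[F]$ because forming group algebras commutes with localization. Applying the relative result with $S=Q(R)$ then gives $\inte(R)[F]=\inte(R,Q(R))[F]=\inte(R[F],Q(R)[F])=\inte(R[F],Q(R[F]))=\inte(R[F])$, and the same computation with $\cinte$ in place of $\inte$. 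The only additional, and comparatively mild, obstacle here is the verification that $Q$ commutes with $\bullet[F]$, which is precisely where entirety of both $R$ and $R[F]$ enters.
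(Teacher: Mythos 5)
Your proposal is correct, and its overall architecture matches the paper's (Theorems \ref{a80} and \ref{fein}): identify the homogeneous elements of $S[F]$ as the products $se_f$, exploit that each $e_f$ is a homogeneous unit of $R[F]$ to reduce to $s\in S^{\hom}$, settle the relative statements, and deduce the absolute ones from entirety of $R[F]$ and $Q(R[F])=Q(R)[F]$ (\ref{p100}). The genuine difference is the device used for the crucial converse step, showing that $s\in S$ (almost) integral over $R[F]$ is (almost) integral over $R$. The paper works with the finite-generation characterisations of \ref{a10}: it normalises a finite generating set $E\subseteq S[F]^{\hom}$ into $S$ by means of the units $e_h$ and then extracts the coefficient of $e_0$, an $R$-linear projection compatible with the fine graduation. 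You instead apply the augmentation $\varepsilon\colon S[F]\rightarrow S$, a ring-homomorphism retraction fixing $S$ and carrying $R[F]$ into $R$, which transports a monic equation, respectively a containment $R[F][s]\subseteq\sum_j R[F]n_j$, wholesale into the corresponding data over $R$; this treats the integral and almost integral cases uniformly and dispenses with the normalisation of $E$, so it is arguably the slicker mechanism. One caveat: $\varepsilon$ is not a morphism of $G\oplus F$-graded rings (it respects only the coarse $G$-graduation), so your argument a priori yields ungraded (almost) integrality of $s$ over $R_{[0]}$ and tacitly invokes the equivalence with the graded notion for homogeneous elements; since the paper establishes exactly this equivalence in \ref{a10} (and, for the passage from ``homogeneous (almost) integral elements'' to $\inte(R,S)^{\hom}$ resp.\ $\cinte(R,S)^{\hom}$, the closure under sums and products in \ref{a20}), this is a matter of citation rather than a gap.
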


It is maybe more interesting to consider a coarser graduation on group algebras, namely the $G$-graduation obtained from $R[F]$ by coarsening with respect to the canonical projection $G\oplus F\twoheadrightarrow G$; we denote the resulting $G$-graded $R$-algebra by $R[F]_{[G]}$ and call it the coarsely graded algebra of $F$ over $R$. We ask for conditions ensuring that formation of coarsely graded group algebras commutes with relative (complete) integral closure, i.e., $\inte(R,S)[F]_{[G]}=\inte(R[F]_{[G]},S[F]_{[G]})$ or $\cinte(R,S)[F]_{[G]}=\cinte(R[F]_{[G]},S[F]_{[G]})$, or -- if $R$ and $R[F]_{[G]}$ are entire -- that formation of coarsely graded group algebras commutes with (complete) integral closure, i.e., $\inte(R)[F]_{[G]}=\inte(R[F]_{[G]})$ or $\cinte(R)[F]_{[G]}=\cinte(R[F]_{[G]})$. Ungraded variants of these questions (i.e., for $G=0$) for a torsionfree group $F$ were studied extensively by Gilmer (\cite[\S 12]{gil}). On use of Theorems 1 and 2 we will get the following results.

\begin{thmno}[3]
Let $G$ and $F$ be groups and let $R$ be a $G$-graded ring. Formation of the coarsely graded group algebra of $F$ over $R$ commutes with relative (complete) integral closure if and only if $F$ is torsionfree. If $R$ is entire and $F$ is torsionfree then formation of the coarsely graded group algebra of $F$ over $R$ commutes with integral closure.
\end{thmno}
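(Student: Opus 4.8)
The plan is to factor the coarsely graded group algebra as a composite of two operations whose behaviour is already controlled. Writing $\pi\colon G\oplus F\twoheadrightarrow G$ for the canonical projection, one has $R[F]_{[G]}=(R[F])_{[\pi]}$ and $S[F]_{[G]}=(S[F])_{[\pi]}$. Here $\ke(\pi)=0\oplus F$ is a direct summand of $G\oplus F$, with complement $G\oplus 0$, and this summand is torsionfree exactly when $F$ is. So the idea is to handle the group-algebra step with Theorem 2 and the coarsening step $\bullet_{[\pi]}$ with Theorem 1, the torsionfreeness of $F$ being precisely the hypothesis that feeds part a) of Theorem 1.

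For the ``if'' direction I would suppose $F$ torsionfree. Then $\ke(\pi)=0\oplus F$ lies in the torsionfree direct summand $0\oplus F$, so part a) of Theorem 1 applies to $\pi$. Combining it with Theorem 2 gives
\[
\inte(R,S)[F]_{[G]}=\inte(R[F],S[F])_{[\pi]}=\inte\bigl(R[F]_{[G]},S[F]_{[G]}\bigr),
\]
where the first equality is Theorem 2 read after applying $\bullet_{[\pi]}$, and the second is Theorem 1a together with $(R[F])_{[\pi]}=R[F]_{[G]}$ and $(S[F])_{[\pi]}=S[F]_{[G]}$; the same chain with $\cinte$ in place of $\inte$ settles the complete case.

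For the ``only if'' direction I would argue by contraposition, exhibiting a base ring $R$ and an overring $S$ for which the two closures differ whenever $F$ has a nonzero torsion element. Pick $y\in F$ of prime order $p$ (a suitable power of any torsion element has prime order), take $R=\F_p$ trivially $G$-graded and $S=R[u]$ with $u$ in degree $0$, and write $e_y\in R[F]$ for the standard basis element of $y$. In characteristic $p$ one has $(e_y-1)^p=e_y^p-1=e_{py}-1=0$, so $s\dfgl e_y-1$ is a nilpotent element of the degree-$0$ component of $R[F]_{[G]}$; hence $us\in S[F]_{[G]}$ is a $G$-homogeneous nilpotent, so integral and even almost integral over $R[F]_{[G]}$, and thus lies in both $\inte(R[F]_{[G]},S[F]_{[G]})$ and $\cinte(R[F]_{[G]},S[F]_{[G]})$. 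On the other hand $\inte(R,S)=\cinte(R,S)=\F_p$, so $\inte(R,S)[F]_{[G]}=\cinte(R,S)[F]_{[G]}$ is the group algebra $\F_p[F]$ with coefficients in $\F_p$; since $u$ occurs in none of its elements, it does not contain $us$. This strict containment shows that formation of the coarsely graded group algebra commutes with neither relative integral nor relative complete integral closure once $F$ fails to be torsionfree.

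For the final assertion I would assume $R$ entire and $F$ torsionfree and aim at $\inte(R)[F]_{[G]}=\inte(R[F]_{[G]})$. One first checks that $R[F]_{[G]}$ is again entire: a torsionfree abelian group is totally orderable, so comparison of leading terms with respect to the $F$-degree shows that $R[F]_{[G]}$ has no homogeneous zero-divisors, which is what makes $Q(R[F]_{[G]})$ and hence $\inte(R[F]_{[G]})$ meaningful. The argument then parallels the ``if'' direction, using the entire version of Theorem 2 (which yields $\inte(R)[F]=\inte(R[F])$ with $R[F]$ entire as a $G\oplus F$-graded ring) followed by part b) of Theorem 1 applied to $R[F]$ and $\pi$. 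The main obstacle is that part b) carries the extra hypothesis that the degree support generate the grading group, whereas $\degsupp(R[F])$ generates only $\langle\degsupp(R)\rangle\oplus F$. I would remove this by reducing to the subgroup $G'\dfgl\langle\degsupp(R)\rangle$: since $R$, $Q(R)$ and $\inte(R)$ are all supported in $G'$, the entire computation of $\inte(R[F]_{[G]})$ takes place over $G'\oplus F$, where $\degsupp(R[F])$ does generate and Theorem 1b applies, and reinterpreting the result in $G$ gives the claim. A secondary point to keep in mind throughout is that $Q(R)[F]_{[G]}\neq Q(R[F]_{[G]})$ in general, so the overring genuinely changes under coarsening; this is exactly why the argument must be routed through Theorems 1 and 2 rather than through a naive comparison of graded fields of fractions.
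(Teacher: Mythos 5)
Your ``only if'' direction is correct and genuinely different from the paper's: where the paper (proof of \ref{a90}, (ii)$\Rightarrow$(iii)) adjoins the idempotent $\frac{1}{n}\sum_{i=0}^{n-1}e_g^i$ to $\Z[F]_{[0]}\subseteq\Q[F]_{[0]}$, you exhibit the nilpotent $u(e_y-1)$ in characteristic $p$; both work (trivial gradings reduce to the ungraded case exactly as you say, via \ref{a20}), and your example is arguably simpler.

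Both of your ``if'' arguments, however, are circular relative to this paper. Theorem 1a is \ref{a120}a, and its proof consists of choosing a section $\pi$ of $\psi$, embedding $R_{[\psi]}$ into the coarsely graded group algebra $R_{[\psi]}[\ke(\psi)]_{[H]}$ via $j^{\pi}$ (\ref{a110}), and then invoking precisely the relative case of the present statement for the torsionfree group $\ke(\psi)$, i.e.\ \ref{a90} (iii)$\Rightarrow$(i),(i'). For split kernels the two statements are equivalent, so your chain $\inte(R,S)[F]_{[G]}=\inte(R[F],S[F])_{[\pi]}=\inte(R[F]_{[G]},S[F]_{[G]})$ merely runs the paper's reduction backwards and supplies none of the substance: namely Bourbaki's $\inte(R,S)[\N^n]_{[0]}=\inte(R[\N^n]_{[0]},S[\N^n]_{[0]})$ combined with localisation (\ref{a30}) and filtered limits (\ref{a50}) for $\inte$, and, for $\cinte$, the leading-coefficient induction with respect to a total ordering of $F$ in the style of Gilmer--Heinzer --- which is unavoidable there, since the paper's counterexamples in \ref{a30} and \ref{a50} show that localisation and filtered limits do \emph{not} commute with complete integral closure, so the $\inte$-style reduction cannot be transported. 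The same problem, one level up, afflicts your final part: Theorem 1b is \ref{4.800}, whose proof goes through \ref{4.600} and \ref{4.700}, and \ref{4.700} rests on \ref{lem50} together with \ref{f10} --- which is exactly the entire-case assertion you are trying to prove. (Your support reduction to $G'=\langle\degsupp(R)\rangle_{\Z}$ is sound in itself, with \ref{a20} and \ref{4.400} justifying the passage back to $G$, but it cannot repair this.) What the paper does instead at that point is self-contained: reduce to $F=\Z$ by limits and induction on rank (\ref{lem3}), show that $Q(R)[\Z]_{[G]}$ admits graded euclidean division, hence is principal, integrally closed and, being noetherian, completely integrally closed (\ref{f20}), and then sandwich $\inte(R[\Z]_{[G]})$ inside $Q(R)[\Z]_{[G]}$ so that the already-established relative case \ref{a90} finishes the computation. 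Unless you prove Theorem 1 independently of Theorem 3 --- and the paper offers no such proof --- your ``if'' directions assume what is to be shown.
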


Some preliminaries on graded rings, coarsening functors, and algebras of groups are collected in Section 1. Relative (complete) integral closures are treated in Section 2, and (complete) integral closures of entire graded rings in their graded fields of fractions are treated in Section 3. Our notation and terminology follows Bourbaki's \textit{\'El\'ements de math\'ematique.}

\smallskip

Before we start, a remark on notation and terminology may be appropriate. Since we try to never omit coarsening functors (and in particular forgetful functors) from our notations it seems conceptually better and in accordance with the general yoga of coarsening to not furnish names of properties of $G$-graded rings or symbols denoting objects constructed from $G$-graded rings with additional symbols that highlight the dependence on $G$ or on the graded structure. For example, if $R$ is a $G$-graded ring then we will denote by $\nzd(R)$ (instead of, e.g., $\nzd_G(R)$) the monoid of its \textit{homogeneous} non-zerodivisors, and we call $R$ entire (instead of, e.g., $G$-entire) if $\nzd(R)$ consists of all homogeneous elements of $R$ different from $0$. Keeping in mind that in this setting the symbol ``$R$'' always denotes a $G$-graded ring (and never, e.g., its underlying ungraded ring), this should not lead to confusions (whereas mixing up different categories might do so).

\medskip

\noindent\textit{Throughout the following let $G$ be a group.}


\section{Preliminaries on graded rings}

First we recall our terminology for graded rings and coarsening functors.

\begin{no}\label{p10}
By a $G$-graded ring we mean a pair $(R,(R_g)_{g\in G})$ consisting of a ring $R$ and a family $(R_g)_{g\in G}$ of subgroups of the additive group of $R$ whose direct sum equals the additive group of $R$ such that $R_gR_h\subseteq R_{g+h}$ for $g,h\in G$. If no confusion can arise we denote a $G$-graded ring $(R,(R_g)_{g\in G})$ just by $R$. Accordingly, for a $G$-graded ring $R$ and $g\in G$ we denote by $R_g$ the component of degree $g$ of $R$. We set $R^{\hom}\dfgl\bigcup_{g\in G}R_g$ and call $\degsupp(R)\dfgl\{g\in G\mid R_g\neq 0\}$ the degree support of $R$. We say that $R$ has full support if $\degsupp(R)=G$ and that $R$ is trivially $G$-graded if $\degsupp(R)=\{0\}$. Given $G$-graded rings $R$ and $S$, by a morphism of $G$-graded rings from $R$ to $S$ we mean a morphism of rings $u\colon R\rightarrow S$ such that $u(R_g)\subseteq S_g$ for $g\in G$. By a $G$-graded $R$-algebra we mean a $G$-graded ring $S$ together with a morphism of $G$-graded rings $R\rightarrow S$. We denote by ${\sf GrAnn}^G$ the category of $G$-graded rings with this notion of morphism. This category has inductive and projective limits. In case $G=0$ we canonically identify ${\sf GrAnn}^G$ with the category of rings.
\end{no}

\begin{no}\label{p20}
Let $\psi\colon G\twoheadrightarrow H$ be an epimorphism of groups. For a $G$-graded ring $R$ we define an $H$-graded ring $R_{[\psi]}$, called the $\psi$-coarsening of $R$; its underlying ring is the ring underlying $R$, and its $H$-graduation is given by $(R_{[\psi]})_h=\bigoplus_{g\in\psi^{-1}(h)}R_g$ for $h\in H$. A morphism $u\colon R\rightarrow S$ of $G$-graded rings can be considered as a morphism of $H$-graded rings $R_{[\psi]}\rightarrow S_{[\psi]}$, and as such it is denoted by $u_{[\psi]}$. This gives rise to a functor $\bullet_{[\psi]}\colon{\sf GrAnn}^G\rightarrow{\sf GrAnn}^H$. This functor has a right adjoint, hence commutes with inductive limits, and it has a left adjoint if and only if $\ke(\psi)$ is finite (\cite[1.6; 1.8]{cih}). For a further epimorphism of groups $\varphi\colon H\twoheadrightarrow K$ we have $\bullet_{[\varphi\circ\psi]}=\bullet_{[\varphi]}\circ\bullet_{[\psi]}$.
\end{no}

\begin{no}\label{p25}
We denote by $\F_G$ the set of subgroups of finite type of $G$, ordered by inclusion, so that $G=\varinjlim_{U\in\F_G}U$.
\end{no}

\begin{no}\label{p30}
Let $F\subseteq G$ be a subgroup. For a $G$-graded ring $R$ we define an $F$-graded ring $R_{(F)}$ with underlying ring the subring $\bigoplus_{g\in F}R_g\subseteq R$ and with $F$-graduation $(R_g)_{g\in F}$. For an $F$-graded ring $S$ we define a $G$-graded ring $S^{(G)}$ with underlying ring the ring underlying $S$ and with $G$-graduation given by $S^{(G)}_g=S_g$ for $g\in F$ and $S^{(G)}_g=0$ for $g\in G\setminus F$. If $R$ is a $G$-graded ring and $\F$ is a set of subgroups of $G$, ordered by inclusion, whose inductive limit is $G$, then $R=\varinjlim_{F\in\F}((R_{(F)})^{(G)})$.
\end{no}

The next remark recalls the two different notions of graded group algebras and, more general, of graded monoid algebras.

\begin{no}\label{p40}
Let $M$ be a cancellable monoid, let $F$ be its group of differences, and let $R$ be a $G$-graded ring. The algebra of $M$ over $R$, furnished with its canonical $G\oplus F$-graduation, is denoted by $R[M]$ and called \textit{the finely graded algebra of $M$ over $R$,} and we denote by $(e_f)_{f\in F}$ its canonical basis. So, for $(g,f)\in G\oplus F$ we have $R[M]_{(g,f)}=R_ge_f$. Denoting by $\pi\colon G\oplus F\twoheadrightarrow G$ the canonical projection we set $R[M]_{[G]}\dfgl R[M]_{[\pi]}$ and call this \textit{the coarsely graded algebra of $M$ over $R$.} If $S$ is a $G$-graded $R$-algebra then $S[M]$ is a $G\oplus F$-graded $R[M]$-algebra, and $S[M]_{[G]}$ is a $G$-graded $R[M]_{[G]}$-algebra. We have $R[F]=\varinjlim_{U\in\F_F}R[U]^{(G\oplus F)}$ and $R[F]_{[G]}=\varinjlim_{U\in\F_F}R[U]_{[G]}$ (\ref{p25}).
\end{no}

We will need some facts about graded variants of simplicity (i.e., the property of ``being a field'') and entirety. Although they are probably well-known, we provide proofs for the readers convenience. Following Lang we use the term ``entire'' instead of ``integral'' (to avoid confusion with the notion of integrality over some ring which is central in this article) or ``domain'' (to avoid questions as whether a ``graded domain'' is the same as a ``domain furnished with a graduation''), and we use the term ``simple'' (which is more common in noncommutative algebra) instead of ``field'' for similar reasons.

\begin{no}\label{p50}
Let $R$ be a $G$-graded ring. We denote by $R^*$ the multiplicative group of invertible homogeneous elements of $R$ and by $\nzd(R)$ the multiplicative monoid of homogeneous non-zerodivisors of $R$. We call $R$ \textit{simple} if $R^*=R^{\hom}\setminus 0$, and \textit{entire} if $\nzd(R)=R^{\hom}\setminus 0$. If $R$ is entire then the $G$-graded ring of fractions $\nzd(R)^{-1}R$ is simple; we denote it by $Q(R)$ and call it \textit{the (graded) field of fractions of $R$.} If $\psi\colon G\twoheadrightarrow H$ is an epimorphism of groups and $R_{[\psi]}$ is simple or entire, then $R$ is so. Let $F\subseteq G$ be a subgroup. If $R$ is simple or entire, then $R_{(F)}$ is so, and an $F$-graded ring $S$ is simple or entire if and only if $S^{(G)}$ is so.
\end{no}

\begin{no}\label{p55}
Let $I$ be a nonempty right filtering preordered set, and let $((R_i)_{i\in I},(\varphi_{ij})_{i\leq j})$ be an inductive system in ${\sf GrAnn}^G$ over $I$. Analogously to \cite[I.10.3 Proposition 3]{a} we see that if $R_i$ is simple or entire for every $i\in I$, then $\varinjlim_{i\in I}R_i$ is simple or entire. If $R_i$ is entire for every $i\in I$ and $\varphi_{ij}$ is a monomorphism for all $i,j\in I$ with $i\leq j$, then by \cite[0.6.1.5]{ega} we get an inductive system $(Q(R_i))_{i\in I}$ in ${\sf GrAnn}^G$ over $I$ with $\varinjlim_{i\in I}Q(R_i)=Q(\varinjlim_{i\in I}R_i)$.
\end{no}

\begin{no}\label{p60}
Let $F\subseteq G$ be a subgroup and let $\leq$ be an ordering on $F$ that is compatible with its structure of group. The relation ``$g-h\in F_{\leq 0}$'' is the finest ordering on $G$ that is compatible with its structure of group and induces $\leq$ on $F$; we call it \textit{the canonical extension of $\leq$ to $G$.} If $\leq$ is a total ordering then its canonical extension to $G$ induces a total ordering on every equivalence class of $G$ modulo $F$.
\end{no}

\begin{lemma}\label{p70}
Let $\psi\colon G\twoheadrightarrow H$ be an epimorphism of groups such that $\ke(\psi)$ is torsionfree, let $R$ be an entire $G$-graded ring, and let $x,y\in R_{[\psi]}^{\hom}\setminus 0$ with $xy\in R^{\hom}$. Then, $x,y\in R^{\hom}$ and $xy\neq 0$.
\end{lemma}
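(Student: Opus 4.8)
The plan is to run a leading-term argument based on a total ordering of $\ke(\psi)$. Since $\ke(\psi)$ is torsionfree, it carries an ordering compatible with its group structure that is \emph{total} (every torsionfree abelian group can be totally ordered). I would fix such an ordering $\leq$ on $\ke(\psi)$ and pass to its canonical extension to $G$ in the sense of \ref{p60}; by that remark the extension induces a total ordering on every coset of $\ke(\psi)$ in $G$, and it remains compatible with the group structure of $G$.

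Next I would write $x=\sum_{g\in A}x_g$ and $y=\sum_{g\in B}y_g$, where $x_g,y_g\in R^{\hom}\setminus 0$ are the nonzero homogeneous components and $A,B\subseteq G$ are the (finite, nonempty) supports. Because $x$ and $y$ are homogeneous in the $H$-graduation of $R_{[\psi]}$, the set $A$ lies in a single coset $\psi^{-1}(h_x)$ and $B$ in a single coset $\psi^{-1}(h_y)$ of $\ke(\psi)$; hence both are totally ordered by the extension above and admit minima and maxima, say $g_-\leq g_+$ in $A$ and $g'_-\leq g'_+$ in $B$.

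The key step is to isolate the extreme homogeneous components of $xy=\sum_{g\in A,\,g'\in B}x_gy_{g'}$. Using compatibility of the ordering with addition, $g+g'\leq g_++g'$ and $g_++g'\leq g_++g'_+$ for all $g\in A$, $g'\in B$, so $g+g'\leq g_++g'_+$; if equality holds then $g+g'=g_++g'$, whence $g=g_+$ by cancellation and then $g'=g'_+$. Thus the pair $(g_+,g'_+)$ is the unique one producing the degree $g_++g'_+$, so the component of $xy$ in that degree equals $x_{g_+}y_{g'_+}$, and symmetrically the component in degree $g_-+g'_-$ equals $x_{g_-}y_{g'_-}$. Since $R$ is entire, products of nonzero homogeneous elements are nonzero (\ref{p50}), so both of these components are nonzero; in particular $xy\neq 0$.

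Finally, the hypothesis $xy\in R^{\hom}$ says that $xy$ is concentrated in a single degree of $G$. As we have exhibited nonzero components of $xy$ in the degrees $g_-+g'_-$ and $g_++g'_+$, these degrees must coincide. But $g_-+g'_-\leq g_++g'_+$ with equality only if $g_-=g_+$ and $g'_-=g'_+$, again by cancellation, so $A$ and $B$ are singletons; that is, $x,y\in R^{\hom}$. I expect the only genuinely delicate points to be the invocation of the existence of a total group ordering on the torsionfree group $\ke(\psi)$ and the uniqueness of the extreme terms established via cancellation; the remainder is bookkeeping.
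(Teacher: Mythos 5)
Your proof is correct and takes essentially the same route as the paper's: a total ordering on the torsionfree kernel (via Bourbaki, II.11.4 Lemme 1), its canonical extension to $G$ as in \ref{p60}, and an extreme-term argument in which entirety makes the top (and bottom) component of $xy$ nonzero. Your min/max of the supports and the cancellation step are exactly the paper's strictly increasing degree sequences and its appeal to Bourbaki VI.1.1 Proposition 1 for uniqueness of the extreme terms, so the two arguments coincide up to notation.
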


\begin{proof}
(cf.~\cite[II.11.4 Proposition 8]{a}) By \cite[II.11.4 Lemme 1]{a} we can choose a total ordering on $\ke(\psi)$ that is compatible with its structure of group. Let $\leq$ denote its canonical extension to $G$ (\ref{p60}). Let $h\dfgl\deg(x)$ and $h'\dfgl\deg(y)$. There exist strictly increasing finite sequences $(g_i)_{i=0}^n$ in $\psi^{-1}(h)$ and $(g'_j)_{j=0}^m$ in $\psi^{-1}(h')$, $x_i\in R_{g_i}\setminus 0$ for $i\in[0,n]$, and $y_j\in R_{g'_j}\setminus 0$ for $j\in[0,m]$ such that $x=\sum_{i=0}^nx_i$ and $y=\sum_{j=0}^my_j$. If $k\in[0,n]$ and $l\in[0,m]$ with $g_k+g'_l=g_n+g'_m$ then $k=n$ and $l=m$ by \cite[VI.1.1 Proposition 1]{a}. This implies that the component of $xy$ of degree $g_n+g'_m$ equals $x_ny_m\neq 0$, so that $xy\neq 0$. As $x_0y_0\neq 0$ and $xy\in R^{\hom}$ it follows $g_0+g'_0=g_n+g'_m$, hence $n=m=0$ and therefore $x,y\in R^{\hom}$.
\end{proof}

\begin{cor}\label{p80}
Let $\psi\colon G\twoheadrightarrow H$ be an epimorphism of groups such that $\ke(\psi)$ is torsionfree, and let $R$ be an entire $G$-graded ring. Then, $R^*=R_{[\psi]}^*$.
\end{cor}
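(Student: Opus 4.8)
The plan is to prove the equality $R^* = R_{[\psi]}^*$ of the groups of invertible homogeneous elements by establishing both inclusions. The inclusion $R^* \subseteq R_{[\psi]}^*$ is immediate and requires no hypothesis: if $x \in R^{\hom}$ is invertible with $x^{-1} \in R^{\hom}$, then $x$ is homogeneous for the coarser $H$-graduation as well (since every $R_g$ sits inside some $(R_{[\psi]})_h$), and its inverse remains homogeneous, so $x \in R_{[\psi]}^*$. The content of the corollary is therefore the reverse inclusion $R_{[\psi]}^* \subseteq R^*$, which is exactly where Lemma \ref{p70} enters.

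For the nontrivial inclusion, I would take $x \in R_{[\psi]}^*$, so that $x \in R_{[\psi]}^{\hom}\setminus 0$ and there exists $y \in R_{[\psi]}^{\hom}\setminus 0$ with $xy = 1$. Now apply Lemma \ref{p70} with this $x$ and $y$: the product $xy = 1$ certainly lies in $R^{\hom}$ (it is the identity, sitting in $R_0$), and $x, y$ are nonzero homogeneous elements of the coarsening. The lemma then yields directly that $x, y \in R^{\hom}$. Thus $x$ is a homogeneous element of $R$ whose inverse $y$ is also a homogeneous element of $R$, which is precisely the statement that $x \in R^*$. This gives $R_{[\psi]}^* \subseteq R^*$, completing the proof.

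I do not expect any genuine obstacle here, since the corollary is essentially a direct specialisation of the preceding lemma: the key insight is simply recognising that invertibility is governed by factorisations $xy = 1$ of the homogeneous unit element, and that $1 \in R^{\hom}$ automatically. The only mild subtlety worth verifying is that the hypotheses of Lemma \ref{p70} are met, namely that $\ke(\psi)$ is torsionfree (given) and that $R$ is entire (given); these are exactly the standing assumptions of the corollary, so no additional work is needed. One could phrase the whole argument in a single line, but I would spell out the two inclusions separately for clarity, emphasising that the easy direction is unconditional while the torsionfreeness of $\ke(\psi)$ is precisely what rules out nonhomogeneous factorisations of units in the coarsening.
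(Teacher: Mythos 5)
Your proof is correct and follows exactly the paper's own argument: the inclusion $R^*\subseteq R_{[\psi]}^*$ is noted as trivial, and for the reverse inclusion one takes $x\in R_{[\psi]}^*$ with $xy=1\in R^{\hom}$ and applies Lemma \ref{p70} to conclude $x,y\in R^{\hom}$, hence $x\in R^*$. No gaps; the hypotheses of the lemma are verified just as the paper implicitly does.
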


\begin{proof}
Clearly, $R^*\subseteq(R_{[\psi]})^*$. If $x\in(R_{[\psi]})^*\subseteq R_{[\psi]}^{\hom}\setminus 0$ then there exists $y\in R_{[\psi]}^{\hom}\setminus 0$ with $xy=1\in R^{\hom}$, so \ref{p70} implies $x\in R^{\hom}$, hence $x\in R^*$.
\end{proof}

\begin{no}\label{p100}
Let $R$ be a $G$-graded ring and let $F$ be a group. It is readily checked that $R[F]$ is simple or entire if and only if $R$ is so. Analogously to \cite[8.1]{gil} it is seen that $R[F]_{[G]}$ is entire if and only if $R$ is entire and $F$ is torsionfree. Together with \ref{p80} it follows that $R[F]_{[G]}$ is simple if and only if $R$ is simple and $F=0$.
\end{no}

\begin{prop}\label{p90}
Let $\psi\colon G\twoheadrightarrow H$ be an epimorphism of groups.

a) The following statements are equivalent: (i) $\psi$ is an isomorphism; (ii) $\psi$-coarsening preserves simplicity.

b) The following statements are equivalent: (i) $\ke(\psi)$ is torsionfree; (ii) $\psi$-coarsening preserves entirety; (iii) $\psi$-coarsening maps simple $G$-graded rings to entire $H$-graded rings.\footnote{In case $H=0$ the implication (i)$\Rightarrow$(ii) is \cite[II.11.4 Proposition 8]{a}.}
\end{prop}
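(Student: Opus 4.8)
The plan is to prove (a) by the two direct implications and (b) as the cycle (i)$\Rightarrow$(ii)$\Rightarrow$(iii)$\Rightarrow$(i), disposing first of the several nearly immediate steps. For (a), the implication (i)$\Rightarrow$(ii) is pure relabelling: if $\psi$ is an isomorphism then $R_{[\psi]}$ arises from $R$ by transporting degrees along $\psi$, so $R^{\hom}=R_{[\psi]}^{\hom}$ and $R^*=R_{[\psi]}^*$, whence $R$ is simple precisely when $R_{[\psi]}$ is. For (b), the implication (ii)$\Rightarrow$(iii) holds because a simple $G$-graded ring is entire (a homogeneous unit is a homogeneous non-zerodivisor), so preservation of entirety in particular sends simple rings to entire ones. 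Finally, (b) (i)$\Rightarrow$(ii) is exactly the leading-coefficient computation already performed in the proof of \ref{p70}: for $R$ entire, $\ke(\psi)$ torsionfree, and $x,y\in R_{[\psi]}^{\hom}\setminus 0$, choosing a total ordering on $\ke(\psi)$ and passing to its canonical extension shows that the top component of $xy$ is the nonzero product of the leading coefficients of $x$ and $y$, so $xy\neq 0$ and $R_{[\psi]}$ is entire; note that the conclusion $xy\neq 0$ needs only the first half of that argument and not the extra hypothesis $xy\in R^{\hom}$ appearing in \ref{p70}.

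The substance of the proposition lies in the two converse implications (a) (ii)$\Rightarrow$(i) and (b) (iii)$\Rightarrow$(i), and I would prove both by contraposition using a single family of counterexamples. Fix a field $K$ and take $R\dfgl K[G]$, the finely $G$-graded group algebra of $G$ over $K$ with basis $(e_g)_{g\in G}$ and $R_g=Ke_g$; this $R$ is simple, since every nonzero homogeneous element has the form $ce_g$ with $c\in K\setminus 0$ and is invertible with inverse $c^{-1}e_{-g}$ (alternatively, \ref{p100}). Its $\psi$-coarsening has degree-$0$ component $(R_{[\psi]})_0=\bigoplus_{g\in\ke(\psi)}Ke_g$, canonically the group algebra $K[\ke(\psi)]$ embedded in $R_{[\psi]}$ in degree $0$. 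The crucial reduction is that, since any homogeneous inverse or homogeneous annihilator of a degree-$0$ element is again of degree $0$, a degree-$0$ element is a homogeneous unit (respectively a homogeneous non-zerodivisor) of $R_{[\psi]}$ if and only if it is a unit (respectively a non-zerodivisor) of the ring $K[\ke(\psi)]$. In this way the group-theoretic nature of $\ke(\psi)$ is detected entirely inside $K[\ke(\psi)]$.

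With this reduction the two implications come out cleanly. If $\psi$ is not an isomorphism, pick $g_0\in\ke(\psi)\setminus 0$; the augmentation $K[\ke(\psi)]\to K$, $e_g\mapsto 1$, annihilates the nonzero degree-$0$ element $e_{g_0}-1$, which is therefore not a unit, so $R_{[\psi]}$ is not simple---giving (a) (ii)$\Rightarrow$(i). If $\ke(\psi)$ has an element $g_0$ of finite order $n>1$, then $(e_{g_0}-1)\bigl(\sum_{i=0}^{n-1}e_{ig_0}\bigr)=e_{ng_0}-1=0$ (as $ng_0=0$) exhibits two nonzero homogeneous zero-divisors of degree $0$, so $R_{[\psi]}$ is not entire although $R$ is simple---giving (b) (iii)$\Rightarrow$(i). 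The conceptual heart of the statement, and the point I would be most careful about, is the gap between the two hypotheses: when $\ke(\psi)\neq 0$ is torsionfree the same $R_{[\psi]}$ is entire by (i)$\Rightarrow$(ii) yet fails to be simple, which is exactly why simplicity forces $\ke(\psi)=0$ while entirety only forces $\ke(\psi)$ torsionfree. The only genuinely technical points are the uniform verification that $e_{g_0}-1$ is a non-unit---best done via the augmentation rather than by cases on the order of $g_0$---and the reduction of homogeneous invertibility and regularity in $R_{[\psi]}$ to the degree-$0$ subring $K[\ke(\psi)]$.
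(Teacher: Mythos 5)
Your proposal is correct, and its core is the same counterexample strategy as the paper's: detect $\ke(\psi)$ inside a group algebra over a field $K$. The implementations differ, though. The paper takes $R=K[\ke(\psi)]^{(G)}$, the finely graded group algebra of the \emph{kernel only}, trivially extended to a $G$-graduation; then $R_{[\psi]}$ is trivially $H$-graded, so its simplicity or entirety is exactly that of the ungraded group algebra $K[\ke(\psi)]_{[0]}$, and the paper simply cites \ref{p100} (which rests on \cite[8.1]{gil} and \ref{p80}) for the facts that this ring is entire iff $\ke(\psi)$ is torsionfree and simple iff $\ke(\psi)=0$. You instead take $R=K[G]$ and reduce homogeneous invertibility in $R_{[\psi]}$ to the degree-zero subring $K[\ke(\psi)]$, then verify the group-algebra facts by hand (augmentation for the non-unit, the identity $(e_{g_0}-1)\sum_{i=0}^{n-1}e_{ig_0}=0$ for the zero-divisors). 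What your route buys is self-containedness -- you never need \ref{p100} or \ref{p80}; what the paper's trivial-extension trick buys is that no reduction-to-degree-zero step is needed at all. The easy implications ((a) (i)$\Rightarrow$(ii), (b) (i)$\Rightarrow$(ii) via \ref{p70}, (b) (ii)$\Rightarrow$(iii)) match the paper, which disposes of them with ``the remaining claims follow from \ref{p70}''.

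Two small caveats, neither fatal. First, your parenthetical claim that \emph{any homogeneous annihilator of a degree-zero element is again of degree zero} is false in general graded rings: in $K[X,Y]/(XY)$ with $\deg X=0$, $\deg Y=1$, the degree-zero element $X$ is annihilated only in nonzero degrees. In your specific ring it is salvageable because $(R_{[\psi]})_h=(R_{[\psi]})_0\,e_g$ for any $g\in\psi^{-1}(h)$ with $e_g$ a unit, so annihilators can be normalized to degree zero -- but in fact you never need this direction: your zero-divisor pair lives in degree zero and witnesses the failure of entirety of $R_{[\psi]}$ directly, and for the non-unit you could equally apply the augmentation $K[G]\to K$ to the whole ring. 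Second, your worry about the hypothesis $xy\in R^{\hom}$ in \ref{p70} is unnecessary: since $0\in R^{\hom}$, if $xy=0$ then the hypothesis holds trivially and the lemma's conclusion $xy\neq 0$ gives an immediate contradiction, so \ref{p70} applies verbatim to prove (b) (i)$\Rightarrow$(ii); no re-entry into its proof is required.
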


\begin{proof}
If $K$ is a field and $R=K[\ke(\psi)]^{(G)}$, then $R$ is simple and $R_{[\psi]}$ is trivially $H$-graded, hence $R_{[\psi]}$ is simple or entire if and only if $R_{[0]}$ is so (\ref{p100}, \ref{p50}). If $\ke(\psi)\neq 0$ then $R_{[0]}$ is not simple, and if $\ke(\psi)$ is not torsionfree then $R_{[0]}$ is not entire (\ref{p100}). This proves a) and the implication (iii)$\Rightarrow$(i) in b). The remaining claims follow from \ref{p70}.
\end{proof}

\begin{no}\label{p91}
A $G$-graded ring $R$ is called \textit{reduced} if $0$ is its only nilpotent homogeneous element. With arguments similar to those above one can show that statements (i)--(iii) in \ref{p90}~b) are also equivalent to the following: (iv) $\psi$-coarsening preserves reducedness; (v) $\psi$-coarsening maps simple $G$-graded rings to reduced $H$-graded rings. We will make no use of this fact.
\end{no}

Finally we make some remarks on a graded variant of noetherianness.

\begin{no}\label{p110}
Let $R$ be a $G$-graded ring. We call $R$ \textit{noetherian} if ascending sequences of $G$-graded ideals of $R$ are stationary, or -- equivalently -- if every $G$-graded ideal of $R$ is of finite type. Analogously to the ungraded case one can prove a graded version of Hilbert's Basissatz: If $R$ is noetherian then so are $G$-graded $R$-algebras of finite type. If $\psi\colon G\twoheadrightarrow H$ is an epimorphism of groups and $R_{[\psi]}$ is noetherian, then $R$ is noetherian. Let $F\subseteq G$ be a subgroup. It follows from \cite[2.1]{gy} that if $R$ is noetherian then so is $R_{(F)}$. Moreover, an $F$-graded ring $S$ is noetherian if and only if $S^{(G)}$ is so.

If $F$ is a group then it follows from \cite[2.1]{gy} and the fact that $e_f\in R[F]^*$ for $f\in F$ that $R[F]$ is noetherian if and only if $R$ is so. Analogously to \cite[7.7]{gil} one sees that $R[F]_{[G]}$ is noetherian if and only if $R$ is noetherian and $F$ is of finite type. More general, it follows readily from a result by Goto and Yamagishi (\cite[1.1]{gy}) that $G$ is of finite type if and only if $\psi$-coarsening preserves noetherianness for every epimorphism of groups $\psi\colon G\twoheadrightarrow H$. (This was proven again two years later by N\v{a}st\v{a}sescu and Van Oystaeyen (\cite[2.1]{nv}).)
\end{no}


\section{Relative integral closures}

We begin this section with basic definitions and first properties of relative (complete) integral closures.

\begin{no}\label{a10}
Let $R$ be a $G$-graded ring and let $S$ be a $G$-graded $R$-algebra. An element $x\in S^{\hom}$ is called \textit{integral over $R$} if it is a zero of a monic polynomial in one indeterminate with coefficients in $R^{\hom}$. This is the case if and only if $x$, considered as an element of $S_{[0]}$, is integral over $R_{[0]}$, as is seen analogously to the first paragraph of \cite[V.1.8]{ac}. Hence, using \cite[V.1.1 Th\'eor\`eme 1]{ac} we see that for $x\in S^{\hom}$ the following statements are equivalent: (i) $x$ is integral over $R$; (ii) the $G$-graded $R$-module underlying the $G$-graded $R$-algebra $R[x]$ is of finite type; (iii) there exists a $G$-graded sub-$R$-algebra of $S$ containing $R[x]$ whose underlying $G$-graded $R$-module is of finite type.

An element $x\in S^{\hom}$ is called \textit{almost integral over $R$} if there exists a $G$-graded sub-$R$-module $T\subseteq S$ of finite type containing $R[x]$. This is the case if and only if $x$, considered as an element of $S_{[0]}$, is almost integral over $R_{[0]}$. Indeed, this condition is obviously necessary. It is also sufficient, for if $T\subseteq S_{[0]}$ is a sub-$R_{[0]}$-module of finite type containing $R_{[0]}[x]$ then the $G$-graded sub-$R$-module $T'\subseteq S$ generated by the set of homogeneous components of elements of $T$ is of finite type and contains $T$, hence $R[x]$. It follows from the first paragraph that if $x\in S^{\hom}$ is integral over $R$ then it is almost integral over $R$; analogously to \cite[V.1.1 Proposition 1 Corollaire]{ac} it is seen that the converse is true if $R$ is noetherian (\ref{p110}).
\end{no}

\begin{no}\label{a20}
Let $R$ be a $G$-graded ring and let $S$ be a $G$-graded $R$-algebra. The $G$-graded sub-$R$-algebra of $S$ generated by the set of elements of $S^{\hom}$ that are (almost) integral over $R$ is denoted by $\inte(R,S)$ (or $\cinte(R,S)$, resp.) and is called \textit{the (complete) integral closure of $R$ in $S$.} We have $\inte(R,S)\subseteq\cinte(R,S)$, with equality if $R$ is noetherian (\ref{a10}). For an epimorphism of groups $\psi\colon G\twoheadrightarrow H$ we have $\inte(R,S)_{[\psi]}\subseteq\inte(R_{[\psi]},S_{[\psi]})$ and $\cinte(R,S)_{[\psi]}\subseteq\cinte(R_{[\psi]},S_{[\psi]})$ (\ref{a10}).

Let $R'$ denote the image of $R$ in $S$. We say that $R$ is \textit{(completely) integrally closed in $S$} if $R'=\inte(R,S)$ (or $R'=\cinte(R,S)$, resp.), and that $S$ is \textit{(almost) integral over $R$} if $\inte(R,S)=S$ (or $\cinte(R,S)=S$, resp.). If $R$ is completely integrally closed in $S$ then it is integrally closed in $S$, and if $S$ is integral over $R$ then it is almost integral over $R$; the converse statements are true if $R$ is noetherian. If $\psi\colon G\twoheadrightarrow H$ is an epimorphism of groups, then $S$ is (almost) integral over $R$ if and only if $S_{[\psi]}$ is (almost) integral over $R_{[\psi]}$, and if $R_{[\psi]}$ is (completely) integrally closed in $S_{[\psi]}$ then $R$ is (completely) integrally closed in $S$. If $G\subseteq F$ is a subgroup then $\inte(R,S)^{(F)}=\inte(R^{(F)},S^{(F)})$ and $\cinte(R,S)^{(F)}=\cinte(R^{(F)},S^{(F)})$, hence $R$ is (completely) integrally closed in $S$ if and only if $R^{(F)}$ is (completely) integrally closed in $S^{(F)}$.

From \cite[V.1.1 Proposition 4 Corollaire 1]{ac} and \cite[\S 135, p.~180]{vdw}\footnote{Note that van der Waerden calls ``integral'' what we call ``almost integral''.} we know that sums and products of elements of $S_{[0]}$ that are (almost) integral over $R_{[0]}$ are again (almost) integral over $R_{[0]}$. Hence, $\inte(R,S)^{\hom}$ (or $\cinte(R,S)^{\hom}$, resp.) equals the set of homogeneous elements of $S$ that are (almost) integral over $R$, and thus $\inte(R,S)$ (or $\cinte(R,S)$, resp.) is (almost) integral over $R$ by the above. Moreover, $\inte(R,S)$ is integrally closed in $S$ by \cite[V.1.2 Proposition 7]{ac}. One should note that $\cinte(R,S)$ is not necessarily completely integrally closed in $S$, not even if $R$ is entire and $S=Q(R)$ (\cite[Example 1]{gilhei}).
\end{no}

\begin{no}\label{a11}
Suppose we have a commutative diagram of $G$-graded rings $$\xymatrix@R15pt@C40pt{R\ar[r]\ar[d]&S\ar[d]^h\\\overline{R}\ar[r]&\overline{S}.}$$ If $x\in S^{\hom}$ is (almost) integral over $R$, then $h(x)\in\overline{S}^{\hom}$ is (almost) integral over $\overline{R}$ (\ref{a10}, \cite[V.1.1 Proposition 2]{ac}, \cite[13.5]{mit}). Hence, if the diagram above is cartesian and $\overline{R}$ is (completely) integrally closed in $\overline{S}$, then $R$ is (completely) integrally closed in $S$.
\end{no}

\begin{no}\label{a30}
Let $R$ be a $G$-graded ring, let $S$ be a $G$-graded $R$-algebra, and let $T\subseteq R^{\hom}$ be a subset. Analogously to \cite[V.1.5 Proposition 16]{ac} one shows that $T^{-1}\inte(R,S)=\inte(T^{-1}R,T^{-1}S)$. Hence, if $R$ is integrally closed in $S$ then $T^{-1}R$ is integrally closed in $T^{-1}S$.

Note that there is no analogous statement for complete integral closures. Although $T^{-1}\cinte(R,S)\subseteq\cinte(T^{-1}R,T^{-1}S)$ by \ref{a11}, this need not be an equality. In fact, by \cite[V.1 Exercice 12]{ac} there exists an entire ring $R$ that is completely integrally closed in $Q(R)$ and a subset $T\subseteq R\setminus 0$ such that $Q(R)$ is the complete integral closure of $T^{-1}R$.
\end{no}

\begin{no}\label{a50}
Let $I$ be a right filtering preordered set, and let $(u_i)_{i\in I}\colon(R_i)_{i\in I}\rightarrow(S_i)_{i\in I}$ be a morphism of inductive systems in ${\sf GrAnn}^G$ over $I$. By \ref{a11} we have inductive systems $(\inte(R_i,S_i))_{i\in I}$ and $(\cinte(R_i,S_i))_{i\in I}$ in ${\sf GrAnn}^G$ over $I$, and we can consider the sub-$\varinjlim_{i\in I}R_i$-algebras $$\textstyle\varinjlim_{i\in I}\inte(R_i,S_i)\subseteq\varinjlim_{i\in I}\cinte(R_i,S_i)\subseteq\varinjlim_{i\in I}S_i$$ and compare them with the sub-$\varinjlim_{i\in I}R_i$-algebras $$\textstyle\inte(\varinjlim_{i\in I}R_i,\varinjlim_{i\in I}S_i)\subseteq\cinte(\varinjlim_{i\in I}R_i,\varinjlim_{i\in I}S_i)\subseteq\varinjlim_{i\in I}S_i.$$ Analogously to \cite[0.6.5.12]{ega} it follows $\varinjlim_{i\in I}\inte(R_i,S_i)=\inte(\varinjlim_{i\in I}R_i,\varinjlim_{i\in I}S_i)$, hence if $R_i$ is integrally closed in $S_i$ for every $i\in I$ then $\varinjlim_{i\in I}R_i$ is integrally closed in $\varinjlim_{i\in I}S_i$.

Note that there is no analogous statement for complete integral closures. Although  $\varinjlim_{i\in I}\cinte(R_i,S_i)\subseteq\cinte(\varinjlim_{i\in I}R_i,\varinjlim_{i\in I}S_i)$ by \ref{a11}, this need not be an equality (but cf.~\ref{lem3}). In fact, by \cite[V.1 Exercice 11 b)]{ac} there exist a field $K$ and an increasing family $(R_n)_{n\in\N}$ of subrings of $K$ such that $R_n$ is completely integrally closed in $Q(R_n)=K$ for every $n\in\N$ and that $\varinjlim_{n\in\N}R_n$ is not completely integrally closed in $Q(\varinjlim_{n\in\N}R_n)=K$.
\end{no}

Now we turn to the behaviour of finely and coarsely graded group algebras with respect to relative (complete) integral closures.

\begin{thm}\label{a80}
Let $R$ be a $G$-graded ring.

a) Formation of finely graded group algebras over $R$ commutes with relative (complete) integral closure.

b) Let $S$ be a $G$-graded $R$-algebra, and let $F$ be a group. Then, $R$ is (completely) integrally closed in $S$ if and only if $R[F]$ is (completely) integrally closed in $S[F]$.
\end{thm}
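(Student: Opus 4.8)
The plan for a) is to verify the asserted equalities degree by degree. Both $\inte(R,S)[F]$ and $\inte(R[F],S[F])$ are $G\oplus F$-graded sub-$R[F]$-algebras of $S[F]$, so it suffices to compare their homogeneous components; the same holds for the complete integral closures. By \ref{p40} a homogeneous element of $S[F]$ of degree $(g,f)$ has the form $se_f$ with $s\in S_g$, and by \ref{a20} the component of $\inte(R,S)[F]$ of degree $(g,f)$ is $\inte(R,S)_ge_f=\{se_f\mid s\in S_g\text{ integral over }R\}$, while that of $\inte(R[F],S[F])$ is $\{se_f\mid s\in S_g,\ se_f\text{ integral over }R[F]\}$ (and analogously with ``almost integral''). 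Everything thus reduces to the claim that for $s\in S_g$ the element $se_f$ is (almost) integral over $R[F]$ if and only if $s$ is (almost) integral over $R$.

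To prove this claim I would first dispose of the twist by $e_f$. Since $e_f\in R[F]^*$ (\ref{p110}) with inverse $e_{-f}$, one has $s=(se_f)e_{-f}$, and multiplication by a homogeneous unit preserves both integrality and almost integrality (for the latter because $R[F][se_f]=R[F][s]$ as sub-$R[F]$-algebras of $S[F]$). Hence $se_f$ is (almost) integral over $R[F]$ if and only if $s$ is, and it remains to compare (almost) integrality of $s$ over $R[F]$ with that over $R$. The implication from $R$ to $R[F]$ is the easy one: it follows from \ref{a11} applied to the canonical morphisms $R\to R[F]$ and $S\to S[F]$ (for almost integrality, if $M\subseteq S$ is a finite-type $G$-graded $R$-module containing $R[s]$, then $M[F]$ is a finite-type $G\oplus F$-graded $R[F]$-module containing $R[F][s]$).

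The substance of a) is the reverse implication. For integrality I would homogenize: extracting from an integral equation for $s$ over $R[F]$ the homogeneous component of degree $n(g,0)$ forces each coefficient $c_i$ into $R[F]_{((n-i)g,0)}=R_{(n-i)g}e_0$, i.e.\ into the image of $R^{\hom}$, whence a monic equation for $s$ with coefficients in $R^{\hom}$. For almost integrality the point is that the $F$-degree-$0$ part of a finite-type $G\oplus F$-graded $R[F]$-submodule $T\subseteq S[F]$ is a finite-type $R$-submodule of $S$: choosing homogeneous generators $t_j=\sigma_je_{f_j}$ of $T$, the $F$-degree-$0$ part of $R[F]t_j$ equals $R\sigma_j$, so the $F$-degree-$0$ part of $T$ is $\sum_jR\sigma_j$; since it contains every power $s^k$, the element $s$ is almost integral over $R$. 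This extraction of a finite $R$-module from a finite $R[F]$-module by passing to the $F$-degree-$0$ part is the main technical step.

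Finally, b) follows formally from a). Writing $R'$ for the image of $R$ in $S$, the image of the induced morphism $R[F]\to S[F]$ is $R'[F]$, and for sub-$R$-algebras $A,B\subseteq S$ one has $A[F]=B[F]$ if and only if $A=B$ (compare $F$-degree-$0$ components). Hence $R$ is (completely) integrally closed in $S$, i.e.\ $R'=\inte(R,S)$ (resp.\ $R'=\cinte(R,S)$), if and only if $R'[F]=\inte(R,S)[F]=\inte(R[F],S[F])$ (resp.\ the analogue with $\cinte$) by a), i.e.\ if and only if $R[F]$ is (completely) integrally closed in $S[F]$.
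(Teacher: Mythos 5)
Your proposal is correct and follows essentially the same route as the paper: both arguments strip off the unit $e_f\in R[F]^*$ to reduce to an element $s\in S^{\hom}$, use \ref{a11} for the easy inclusion, and descend (almost) integrality from $R[F]$ to $R$ by extracting the $F$-degree-$0$ part (the coefficient of $e_0$) from a finite $R[F]$-generating set, with b) then a formal consequence of a). The only cosmetic difference is that for plain integrality you homogenize a monic equation, whereas the paper runs the same $e_0$-coefficient comparison through the finite-module characterisation of \ref{a10} for both the integral and the almost integral case.
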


\begin{proof}
a) Let $F$ be a group, and let $S$ be a $G$-graded $R$-algebra. Let $x\in S[F]^{\hom}$. There are $s\in S^{\hom}$ and $f\in F$ with $x=se_f$. If $x\in\inte(R,S)[F]^{\hom}$ then $s\in\inte(R,S)^{\hom}$, hence $s\in\inte(R[F],S[F])$ (\ref{a11}), and as $e_f\in\inte(R[F],S[F])$ it follows $x=se_f\in\inte(R[F],S[F])$. This shows $\inte(R,S)[F]\subseteq\inte(R[F],S[F])$. Conversely, suppose that $x\in\inte(R[F],S[F])^{\hom}$. As $e_f\in R[F]^*$ it follows $s\in\inte(R[F],S[F])^{\hom}$. So, there is a finite subset $E\subseteq S[F]^{\hom}$ such that the $G\oplus F$-graded sub-$R[F]$-algebra of $S[F]$ generated by $E$ contains $R[F][s]$. As $e_h\in R[F]^*$ for every $h\in F$ we can suppose $E\subseteq S$. If $n\in\N$ then $s^n$ is an $R[F]$-linear combination of products in $E$, and comparing the coefficients of $e_0$ shows that $s^n$ is an $R$-linear combination of products in $E$. Thus, $R[s]$ is contained in the $G$-graded sub-$R$-algebra of $S$ generated by $E$, hence $s\in\inte(R,S)$, and therefore $x\in\inte(R,S)[F]$. This shows $\inte(R[F],S[F])\subseteq\inte(R,S)[F]$. The claim for complete integral closures follows analogously. b) follows immediately from a).
\end{proof}

\begin{thm}\label{a90}
Let $F$ be a group. The following statements are equivalent:\footnote{In case $G=0$ the implication (iii)$\Rightarrow$(i) is \cite[V.1 Exercice 24]{ac}.}
\begin{equi}
\item[(i)] Formation of coarsely graded algebras of $F$ over $G$-graded rings commutes with relative integral closure;
\item[(i')] Formation of coarsely graded algebras of $F$ over $G$-graded rings commutes with relative complete integral closure;
\item[(ii)] If $R$ is a $G$-graded ring and $S$ is a $G$-graded $R$-algebra, then $R$ is integrally closed in $S$ if and only if $R[F]_{[G]}$ is integrally closed in $S[F]_{[G]}$;
\item[(ii')] If $R$ is a $G$-graded ring and $S$ is a $G$-graded $R$-algebra, then $R$ is completely integrally closed in $S$ if and only if $R[F]_{[G]}$ is completely integrally closed in $S[F]_{[G]}$;
\item[(iii)] $F$ is torsionfree.
\end{equi}
\end{thm}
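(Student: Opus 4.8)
The plan is to establish the chain (iii)$\Rightarrow$(i)$\Rightarrow$(ii)$\Rightarrow$(iii) together with (iii)$\Rightarrow$(i')$\Rightarrow$(ii')$\Rightarrow$(iii); since all six implications funnel through (iii), this yields the equivalence of all five statements. The organising observation is that $\bullet_{[G]}=\bullet_{[\pi]}$ for the canonical projection $\pi\colon G\oplus F\twoheadrightarrow G$, whose kernel is $\{0\}\oplus F$.

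For (iii)$\Rightarrow$(i) and (iii)$\Rightarrow$(i') I would start from \ref{a80}~a), which gives $\inte(R,S)[F]=\inte(R[F],S[F])$ and the analogous identity for $\cinte$ as $G\oplus F$-graded rings, and apply the functor $\bullet_{[\pi]}$ to both sides. The left-hand side becomes $\inte(R,S)[F]_{[G]}$, and it then remains to identify $\inte(R[F],S[F])_{[\pi]}$ with $\inte(R[F]_{[G]},S[F]_{[G]})$. This is precisely the assertion that $\pi$-coarsening commutes with relative (complete) integral closure, which holds by Theorem~1~a) as soon as $\ke(\pi)=\{0\}\oplus F$ is contained in a torsionfree direct summand of $G\oplus F$. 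If $F$ is torsionfree this is immediate, since $\{0\}\oplus F$ is itself a torsionfree direct summand, with complement $G\oplus\{0\}$.

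For (i)$\Rightarrow$(ii) and (i')$\Rightarrow$(ii') I would specialise. If $R$ is (completely) integrally closed in $S$, so that the (complete) integral closure of $R$ in $S$ equals the image $R'$ of $R$, then (i) (or (i')) gives that the (complete) integral closure of $R[F]_{[G]}$ in $S[F]_{[G]}$ equals $R'[F]_{[G]}$, which is the image of $R[F]_{[G]}$; hence $R[F]_{[G]}$ is (completely) integrally closed in $S[F]_{[G]}$. The reverse implications in (ii) and (ii') require no hypothesis on $F$ at all: if $R[F]_{[G]}=(R[F])_{[\pi]}$ is (completely) integrally closed in $(S[F])_{[\pi]}$, then $R[F]$ is (completely) integrally closed in $S[F]$ by \ref{a20}, and therefore $R$ is (completely) integrally closed in $S$ by \ref{a80}~b).

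The crux is (ii)$\Rightarrow$(iii) and (ii')$\Rightarrow$(iii), which I would prove contrapositively by exhibiting one counterexample refuting the forward implication in both (ii) and (ii') whenever $F$ is not torsionfree. Passing to a suitable multiple of a torsion element, I may assume $F$ contains an element $t$ of prime order $p$. Let $K$ be a field of characteristic $p$, and set $R\dfgl K$ and $S\dfgl K[Z]$, a polynomial ring in one indeterminate, both trivially $G$-graded. Then $R$ is completely integrally closed, hence integrally closed, in $S$. In $R[F]_{[G]}=K[F]$ the element $e_t-1$ is a nonzero nilpotent, since $(e_t-1)^p=e_t^p-1=e_{pt}-1=0$ because $pt=0$; consequently $(e_t-1)Z\in S[F]_{[G]}$ is a nonzero homogeneous nilpotent, hence integral, and a fortiori almost integral, over $R[F]_{[G]}$, yet it does not lie in $K[F]=R[F]_{[G]}$. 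Thus $R[F]_{[G]}$ is neither integrally nor completely integrally closed in $S[F]_{[G]}$, whereas $R$ is both in $S$. This is the main obstacle, and it pinpoints the mechanism: the torsion of $F$ produces, after coarsening, nilpotents in the group algebra that the variable $Z$ then carries outside $R[F]_{[G]}$ — a phenomenon absent in the finely graded setting of \ref{a80}, where $e_t$ remains a unit of nonzero degree.
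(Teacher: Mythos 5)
Your implications (iii)$\Rightarrow$(i) and (iii)$\Rightarrow$(i') contain a genuine gap: they are circular relative to this paper. You reduce them, via \ref{a80}~a) and the projection $\pi\colon G\oplus F\twoheadrightarrow G$, to the assertion that $\pi$-coarsening commutes with relative (complete) integral closure, and you invoke Theorem~1~a) (i.e.\ \ref{a120}~a)) for this. But in the paper \ref{a120}~a) is itself proved by embedding $R_{[\psi]}$ into the coarsely graded group algebra $R_{[\psi]}[\ke(\psi)]_{[H]}$ via the morphism $j^{\pi}$ of \ref{a110} and then citing precisely the implications (iii)$\Rightarrow$(i) and (iii)$\Rightarrow$(i') of \ref{a90} that you are trying to prove; Theorem~\ref{a90} is the logical base, not a consequence, of Theorem~1~a). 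Since your proposal supplies no independent proof of Theorem~1~a), the entire content of (iii)$\Rightarrow$(i)/(i') is missing. That content is the hard part of the theorem: the paper reduces to $G=0$ (\ref{a20}) and, for (i), to $F$ of finite type and hence $F\cong\Z^n$ (\ref{p40}, \ref{a50}), deduces the case $\N^n$ from Bourbaki, and passes to $\Z^n$ by localizing at the monomials (\ref{a30}); and since inductive limits do \emph{not} commute with complete integral closure (\ref{a50}), the implication (iii)$\Rightarrow$(i') requires a separate direct argument \`a la Gilmer--Heinzer: choose a total ordering on the torsionfree group $F$, take $x=\sum_{i=1}^n x_ie_{f_i}$ almost integral, show that the leading coefficient satisfies $x_n^k\in\langle Q\rangle_R$ for all $k$ (so $x_n\in\cinte(R,S)$), and induct on $n$. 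Nothing replacing this appears in your proposal; in particular the (i') case cannot be recovered from any limit argument.

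The rest of your proposal is correct, and your treatment of (ii)$\Rightarrow$(iii), (ii')$\Rightarrow$(iii) is a genuinely different and arguably slicker construction than the paper's. Where the paper works with $\Z\subseteq\Q$ and verifies by a hand-crafted computation that the idempotent $f=\sum_{i=0}^{n-1}\frac{1}{n}e_g^i$ satisfies a monic quadratic over $\Z[F]_{[0]}$, you use characteristic-$p$ nilpotency: $(e_t-1)^p=0$ in $K[F]$ for $t$ of prime order $p$ and $\mathrm{char}(K)=p$, so $(e_t-1)Z$ is a nonzero homogeneous nilpotent in $K[Z][F]_{[0]}$, hence integral (thus almost integral) over $K[F]$ without lying in it, while $K$, being a field and in particular noetherian, is completely integrally closed in $K[Z]$ (\ref{a10}); so one example refutes both (ii) and (ii'), exactly as the paper's noetherian example does. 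Your steps (i)$\Rightarrow$(ii), (i')$\Rightarrow$(ii') and the converse directions of (ii), (ii') via \ref{a20} and \ref{a80}~b) are also fine (the paper gets the converse more directly from the cartesian square of \ref{a11}). To repair the proposal you must prove (iii)$\Rightarrow$(i) and (iii)$\Rightarrow$(i') from scratch along the lines sketched above, rather than quoting Theorem~1~a).
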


\begin{proof}
``(i)$\Rightarrow$(ii)'' and ``(i')$\Rightarrow$(ii')'': Immediately from \ref{a11}.

``(ii)$\Rightarrow$(iii)'' and ``(ii')$\Rightarrow$(iii)'': Suppose that $F$ is not torsionfree. It suffices to find a noetherian ring $R$ and an $R$-algebra $S$ such that $R$ is integrally closed in $S$ and that $R[F]_{[0]}$ is not integrally closed in $S[F]_{[0]}$, for then furnishing $R$ and $S$ with trivial $G$-graduations it follows that $R[F]_{[G]}$ is not integrally closed in $S[F]_{[G]}$ (\ref{a20}). The ring $\Z$ is noetherian and integrally closed in $\Q$. By hypothesis there exist $g\in F\setminus 0$ and $n\in\N_{>1}$ with $ng=0$, so that $e_g^n=1\in\Q[F]_{[0]}$. It is readily checked that $f\dfgl\sum_{i=0}^{n-1}\frac{1}{n}e_g^i\in\Q[F]_{[0]}\setminus\Z[F]_{[0]}$ is idempotent. Setting $c\dfgl 1+(n-1)e_g^{n-1}\in\Z[F]_{[0]}$ we get $$\textstyle d\dfgl fc=f+(n-1)fe_g^{n-1}=f+\sum_{i=0}^{n-1}\frac{n-1}{n}e_g^{i+n-1}=f+\sum_{i=0}^{n-1}\frac{n-1}{n}e_g^{i-1}=$$$$\textstyle f+\frac{n-1}{n}e_g^{-1}+\sum_{i=0}^{n-2}\frac{n-1}{n}e_g^i+\frac{n-1}{n}e_g^{n-1}-\frac{n-1}{n}e_g^{n-1}=f+(n-1)f=nf\in\Z[F]_{[0]}.$$ Therefore, $f^2+(c-1)f-d=f+d-f-d=0$ yields an integral equation for $f$ over $\Z[F]_{[0]}$. Thus, $\Z[F]_{[0]}$ is not integrally closed in $\Q[F]_{[0]}$.

``(iii)$\Rightarrow$(i)'' and ``(iii)$\Rightarrow$(i')'': Without loss of generality suppose $G=0$ (\ref{a20}). Suppose that $F$ is torsionfree, let $R$ be a ring, and let $S$ be an $R$-algebra. If $n\in\N$ then $\inte(R,S)[\N^n]_{[0]}=\inte(R[\N^n]_{[0]},S[\N^n]_{[0]})$ (\cite[V.1.3 Proposition 12]{ac}), hence $$\inte(R[\Z^n]_{[0]},S[\Z^n]_{[0]})=\inte(R,S)[\Z^n]_{[0]}$$ (\ref{a30}). This proves (i) in case $F$ is of finite type, and so we get (i) in general by \ref{p40}, \ref{a20} and \ref{a50}. It remains to show (i'). The inclusion $\cinte(R,S)[F]_{[0]}\subseteq\cinte(R[F]_{[0]},S[F]_{[0]})$ follows immediately from \ref{a20} and \ref{a11}. We prove the converse inclusion analogously to \cite[Proposition 1]{gilhei}. Since $F$ is torsionfree we can choose a total ordering $\leq$ on $F$ that is compatible with its structure of group (\cite[II.11.4 Lemme 1]{a}). Let $x\in\cinte(S[F]_{[0]},R[F]_{[0]})$. There are $n\in\N$, a family $(x_i)_{i=1}^n$ in $S\setminus 0$, and a strictly increasing family $(f_i)_{i=1}^n$ in $F$ with $x=\sum_{i=1}^nx_ie_{f_i}$. We prove by induction on $n$ that $x\in\cinte(R,S)[F]_{[0]}$. If $n=0$ this is clear. Suppose that $n>0$ and that the claim is true for strictly smaller values of $n$. There exists a finite subset $P\subseteq S[F]_{[0]}$ with $R[F]_{[0]}[x]\subseteq\langle P\rangle_{R[F]}$. Let $Q$ denote the finite set of coefficients of elements of $P$. Let $k\in\N$. There exists a family $(s_p)_{p\in P}$ in $R[F]$ with $x^k=\sum_{p\in P}s_pp$. By means of the ordering of $F$ we see that $x_n^k$ is the coefficient of $e_{f_{kn}}$ in $x^k$, hence the coefficient of $e_{f_{kn}}$ in $\sum_{p\in P}s_pp$. This latter being an $R$-linear combination of $Q$ we get $x_n^k\in\langle Q\rangle_R$. It follows $R[x_n]\subseteq\langle Q\rangle_R$, and thus $x_n\in\cinte(R,S)$. So, we get $x_ne_{f_n}\in\cinte(R[F]_{[0]},S[F]_{[0]})$ (\ref{a20}, \ref{a11}), hence $x-x_ne_{f_n}\in\cinte(R[F]_{[0]},S[F]_{[0]})$, thus $x-x_ne_{f_n}\in\cinte(R,S)[F]_{[0]}$ by our hypothesis, and therefore $x\in\cinte(R,S)[F]_{[0]}$ as desired.
\end{proof}

\begin{no}\label{a91}
The proof above shows that \ref{a90} remains true if we replace ``If $R$ is a $G$-graded ring'' by ``If $R$ is a noetherian $G$-graded ring'' in (ii) and (ii').
\end{no}

The rest of this section is devoted to the study of the behaviour of relative (complete) integral closures under arbitrary coarsening functors. Although we are not able to characterise those coarsenings with good behaviour, we identify two properties of the coarsening, one that implies good behaviour of (complete) integral closures, and one that is implied by good behaviour of (complete) integral closures.

\begin{no}\label{a100}
Let $\psi\colon G\twoheadrightarrow H$ be an epimorphism of groups. We say that \textit{$\psi$-coarsening commutes with relative (complete) integral closure} if $\inte(R,S)_{[\psi]}=\inte(R_{[\psi]},S_{[\psi]})$ (or $\cinte(R,S)_{[\psi]}=\cinte(R_{[\psi]},S_{[\psi]})$, resp.) for every $G$-graded ring $R$ and every $G$-graded $R$-algebra $S$.
\end{no}

\begin{prop}\label{a101}
Let $\psi\colon G\twoheadrightarrow H$ be an epimorphism of groups. We consider the following statements:
\begin{aufz}
\item[(1)] $\psi$-coarsening commutes with relative integral closure;
\item[(1')] $\psi$-coarsening commutes with relative complete integral closure;
\item[(2)] If $R$ is a $G$-graded ring, $S$ is a $G$-graded $R$-algebra, and $x\in S_{[\psi]}^{\hom}$, then $x$ is integral over $R_{[\psi]}$ if and only if all its homogeneous components (with respect to the $G$-graduation) are integral over $R$;
\item[(2')] If $R$ is a $G$-graded ring, $S$ is a $G$-graded $R$-algebra, and $x\in S_{[\psi]}^{\hom}$, then $x$ is almost integral over $R_{[\psi]}$ if and only if all its homogeneous components (with respect to the $G$-graduation) are almost integral over $R$;
\item[(3)] If $R$ is a $G$-graded ring and $S$ is a $G$-graded $R$-algebra, then $R$ is integrally closed in $S$ if and only if $R_{[\psi]}$ is integrally closed in $S_{[\psi]}$.
\item[(3')] If $R$ is a $G$-graded ring and $S$ is a $G$-graded $R$-algebra, then $R$ is completely integrally closed in $S$ if and only if $R_{[\psi]}$ is completely integrally closed in $S_{[\psi]}$.
\end{aufz}
Then, we have (1)$\Leftrightarrow$(2)$\Leftrightarrow$(3) and (1')$\Leftrightarrow$(2')$\Rightarrow$(3').
\end{prop}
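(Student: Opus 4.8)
The plan is to handle the two columns in parallel, obtaining the equivalences with $(1)$ and $(1')$ by a reformulation at the level of homogeneous elements, deducing $(3)$ and $(3')$ from them cheaply, and proving the one genuine implication $(3)\Rightarrow(2)$ by a single construction; the complete-integral-closure column will run identically except that this construction is unavailable, which is precisely why only $(2')\Rightarrow(3')$ is claimed.

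First I would establish $(1)\Leftrightarrow(2)$ and $(1')\Leftrightarrow(2')$ as reformulations. For fixed $R$ and $S$ both $\inte(R,S)_{[\psi]}$ and $\inte(R_{[\psi]},S_{[\psi]})$ are $H$-graded sub-$R_{[\psi]}$-algebras of $S_{[\psi]}$ and are therefore determined by their homogeneous elements. By \ref{a20} an element $x\in S_{[\psi]}^{\hom}$ lies in $\inte(R_{[\psi]},S_{[\psi]})^{\hom}$ if and only if it is integral over $R_{[\psi]}$; on the other hand, by the description of the coarsened graduation (\ref{p20}) together with the fact that $\inte(R,S)^{\hom}$ is the set of homogeneous elements of $S$ integral over $R$ (\ref{a20}), $x$ lies in $\inte(R,S)_{[\psi]}^{\hom}$ if and only if every $G$-homogeneous component of $x$ is integral over $R$. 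Since $\inte(R,S)_{[\psi]}\subseteq\inte(R_{[\psi]},S_{[\psi]})$ always holds (\ref{a20}), equality is equivalent to the reverse inclusion on homogeneous elements, which is exactly the nontrivial direction of the biconditional in $(2)$; quantifying over all $R,S$ gives $(1)\Leftrightarrow(2)$. Replacing $\inte$ by $\cinte$ and using that $\cinte(R,S)^{\hom}$ is the set of homogeneous almost integral elements (\ref{a20}) yields $(1')\Leftrightarrow(2')$.

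Next I would dispatch $(2)\Rightarrow(3)$ and $(2')\Rightarrow(3')$, and then the crucial $(3)\Rightarrow(2)$. One direction of $(3)$ is free: by \ref{a20}, $R_{[\psi]}$ being (completely) integrally closed in $S_{[\psi]}$ forces $R$ to be (completely) integrally closed in $S$, with no hypothesis on $\psi$. For the other direction, if $R$ is integrally closed in $S$ then $\inte(R,S)$ is the image $R'$ of $R$, so under $(1)$ we get $\inte(R_{[\psi]},S_{[\psi]})=\inte(R,S)_{[\psi]}=R'_{[\psi]}$, whence $R_{[\psi]}$ is integrally closed in $S_{[\psi]}$; the complete case is identical. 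For $(3)\Rightarrow(2)$, fix $x\in S_{[\psi]}^{\hom}$. The implication ``components integral over $R$ $\Rightarrow$ $x$ integral over $R_{[\psi]}$'' is automatic from \ref{a20}. For the converse, set $\overline R\dfgl\inte(R,S)$, which is integrally closed in $S$ (\ref{a20}); applying $(3)$ to $\overline R$ and $S$ shows $\overline R_{[\psi]}$ is integrally closed in $S_{[\psi]}$. If $x$ is integral over $R_{[\psi]}$ it is integral over the larger ring $\overline R_{[\psi]}$, hence, being homogeneous, lies in $\overline R_{[\psi]}$; its $G$-components then lie in $\overline R=\inte(R,S)$, i.e.\ are integral over $R$. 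This closes $(1)\Leftrightarrow(2)\Leftrightarrow(3)$.

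The hard part, and the reason for the asymmetry between the two columns, is exactly the step $(3)\Rightarrow(2)$: it relies on $\inte(R,S)$ being integrally closed in $S$. No such fact is available for complete integral closures, since $\cinte(R,S)$ need not be completely integrally closed in $S$ (\ref{a20}); thus the construction $\overline R\dfgl\cinte(R,S)$ does not furnish a ring to which $(3')$ applies, and I would not expect $(3')\Rightarrow(2')$ to hold in general. This is why the complete column stops at $(1')\Leftrightarrow(2')\Rightarrow(3')$.
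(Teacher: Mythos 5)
Your proof is correct and takes essentially the same route as the paper: the equivalences (1)$\Leftrightarrow$(2), (1')$\Leftrightarrow$(2') and the implications to (3), (3') are read off the definitions via \ref{a20}, and the one substantive step --- recovering (2) (equivalently (1)) from (3) --- is exactly the paper's argument, namely applying (3) to the pair $(\inte(R,S),S)$ and exploiting that $\inte(R,S)$ is integrally closed in $S$, which you merely phrase element-wise where the paper writes the chain of inclusions $\inte(R_{[\psi]},S_{[\psi]})\subseteq\inte(\inte(R,S)_{[\psi]},S_{[\psi]})=\inte(R,S)_{[\psi]}$. Your closing explanation of why this construction is unavailable for $\cinte$ coincides with the paper's own remark following its proof.
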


\begin{proof}
The implications ``(1)$\Leftrightarrow$(2)$\Rightarrow$(3)'' and ``(1')$\Leftrightarrow$(2')$\Rightarrow$(3')'' follow immediately from the definitions. Suppose (3) is true, let $R$ be a $G$-graded ring $R$, and let $S$ be a $G$-graded $R$-algebra. As $\inte(R,S)$ is integrally closed in $S$ (\ref{a20}) it follows that $\inte(R,S)_{[\psi]}$ is integrally closed in $S_{[\psi]}$, implying $$\inte(R_{[\psi]},S_{[\psi]})\subseteq\inte(\inte(R,S)_{[\psi]},S_{[\psi]})=\inte(R,S)_{[\psi]}\subseteq\inte(R_{[\psi]},S_{[\psi]})$$ (\ref{a20}) and thus the claim.
\end{proof}

The argument above showing that (3) implies (1) cannot be used to show that (3') implies (1'), as $\cinte(R,S)$ is not necessarily completely integrally closed in $S$ (\ref{a20}).

\begin{no}\label{a110}
Let $\psi\colon G\twoheadrightarrow H$ be an epimorphism of groups, suppose that there exists a section $\pi\colon H\rightarrow G$ of $\psi$ in the category of groups, and let $R$ be a $G$-graded ring. For $g\in G$ there is a morphism of groups $$j_{R,g}^{\pi}\colon R_g\rightarrow R_{[0]}[\ke(\psi)],x\mapsto xe_{g+\pi(\psi(g))}.$$ The family $(j^{\pi}_{R,g})_{g\in G}$ induces a morphism of groups $j^{\pi}_R\colon \bigoplus_{g\in G}R_g\rightarrow R_{[0]}[\ke(\psi)]_{[0]}$ that is readily checked to be a morphism $j^{\pi}_R\colon R_{[\psi]}\rightarrow R_{[\psi]}[\ke(\psi)]_{[H]}$ of $H$-graded rings.
\end{no}

\begin{thm}\label{a120}
Let $\psi\colon G\twoheadrightarrow H$ be an epimorphism of groups.

a) If $\ke(\psi)$ is contained in a torsionfree direct summand of $G$ then $\psi$-coarsening commutes with relative (complete) integral closure.

b) If $\psi$-coarsening commutes with relative (complete) integral closure then $\ke(\psi)$ is torsionfree.
\end{thm}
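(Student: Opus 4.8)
The plan is to deduce a) from the criterion in \ref{a101}. Since a torsionfree direct summand is torsionfree and subgroups of torsionfree groups are torsionfree, the hypothesis yields that $\ke(\psi)$ is torsionfree, and this is all I shall use. By \ref{a101} it suffices to verify statements (2) and (2'): for $x\in S_{[\psi]}^{\hom}$ the element $x$ is (almost) integral over $R_{[\psi]}$ if and only if each of its $G$-homogeneous components is (almost) integral over $R$. The implication ``$\Leftarrow$'' is immediate from \ref{a20} (the (almost) integral homogeneous elements form a sub-$R_{[\psi]}$-algebra) together with \ref{a11}. For ``$\Rightarrow$'' I record the key simplification coming from \ref{a10}: a $G$-homogeneous element is (almost) integral over $R$ if and only if it is so over the underlying ring $R_{[0]}$, and an $H$-homogeneous element is (almost) integral over $R_{[\psi]}$ if and only if it is so over $(R_{[\psi]})_{[0]}=R_{[0]}$. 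Hence it is enough to show that if $x\in S_{[\psi]}^{\hom}$, of $H$-degree $h$, is (almost) integral over $R_{[\psi]}$, then each of its $G$-homogeneous components (which all lie in degrees of the single coset $\psi^{-1}(h)$) is (almost) integral over $R_{[\psi]}$.

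I treat the complete (almost integral) case by a leading-term argument. As $\ke(\psi)$ is torsionfree I may choose a total ordering on it compatible with the group structure, whose canonical extension (\ref{p60}) induces a total ordering on the coset $\psi^{-1}(h)$. Write $x=\sum_g x_g$ with $g$ ranging over this coset and let $g_{\max}$ be the largest degree with $x_g\neq 0$. Since $x$ is almost integral over $R_{[0]}$, all powers $x^k$ lie in a finitely generated $R_{[0]}$-submodule of $S_{[0]}$; replacing it by the $G$-graded $R$-submodule $M$ generated by the homogeneous components of its generators produces a finitely generated graded $R$-module that still contains every $x^k$. The ordering shows that the $G$-homogeneous component of $x^k$ of degree $kg_{\max}$ equals $x_{g_{\max}}^k$; as this component lies in $M$, all powers of $x_{g_{\max}}$ lie in the finitely generated graded $R$-module $M$, so $x_{g_{\max}}$ is almost integral over $R$. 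Then $x-x_{g_{\max}}$ is again $H$-homogeneous of degree $h$ and almost integral over $R_{[\psi]}$ (\ref{a20}), with one fewer nonzero component, and downward induction on the number of components finishes (2').

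For the integral case the same computation yields only that the components are \emph{almost} integral over $R$, which over a non-noetherian base is weaker than required; this is the main obstacle. I circumvent it by noetherian reduction. An integral dependence of $x$ over $R_{[\psi]}$ involves only finitely many homogeneous elements of $R$ (the coefficients, written out in homogeneous components) and of $S$, and $x$ has finitely many homogeneous components; these generate a $G$-graded subring $R'\subseteq R$ that is noetherian by the graded Basissatz (\ref{p110}) and a noetherian $G$-graded $R'$-algebra $S'\subseteq S$ containing $x$, with $x$ integral over $R'_{[\psi]}$. Over the noetherian ring $R'$ integral and almost integral elements coincide (\ref{a20}, \ref{a10}), so the complete case just proved shows that each component of $x$ is integral over $R'$, hence over $R$ by \ref{a11}. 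This establishes (2) and completes a).

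For b) I argue by contraposition, reusing the idempotent from the proof of \ref{a90}. If $\ke(\psi)$ is not torsionfree, choose $k\in\ke(\psi)$ of finite order $n>1$ and put $V=\langle k\rangle\cong\Z/n\Z$. Consider the $G$-graded rings $R=\Z[V]^{(G)}\subseteq S=\Q[V]^{(G)}$ (\ref{p30}). Because each $e_v$ is a unit and the integral closure of $\Z$ in $\Q$ is $\Z$, the only homogeneous elements of $S$ that are (almost) integral over $R$ are those of $R$; thus $R$ is (completely) integrally closed in $S$ and $\inte(R,S)_{[\psi]}=\cinte(R,S)_{[\psi]}=R_{[\psi]}$. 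On the other hand $V\subseteq\ke(\psi)$, so $R_{[\psi]}$ and $S_{[\psi]}$ are trivially $H$-graded with underlying rings $\Z[V]$ and $\Q[V]$; the element $f=\frac1n\sum_{i=0}^{n-1}e_k^i\in\Q[V]\setminus\Z[V]$ satisfies $f^2=f$, hence is integral over $\Z[V]$, so $f\in\inte(R_{[\psi]},S_{[\psi]})\subseteq\cinte(R_{[\psi]},S_{[\psi]})$ while $f\notin R_{[\psi]}$. Therefore neither $\inte(R,S)_{[\psi]}=\inte(R_{[\psi]},S_{[\psi]})$ nor the analogous equality for $\cinte$ can hold, which shows that commutation (with either closure) forces $\ke(\psi)$ to be torsionfree.
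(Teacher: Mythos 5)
Your argument is correct, but it is not the paper's argument, and the differences are substantial. For part a) the paper never works on the coarsening directly: it first assumes that $\ke(\psi)$ itself is a torsionfree direct summand, uses a section $\pi$ of $\psi$ to build the embedding $j^{\pi}_S\colon S_{[\psi]}\rightarrow S_{[\psi]}[\ke(\psi)]_{[H]}$ of \ref{a110} --- this is exactly where the direct-summand hypothesis enters, since the section is what allows a coherent choice of base points in the $\ke(\psi)$-cosets --- and then quotes Theorem \ref{a90} for the coarsely graded algebra of the torsionfree group $\ke(\psi)$; the general case ($\ke(\psi)$ merely contained in a torsionfree direct summand $F$) is handled by comparing the coarsenings $\bullet_{[\psi]}$ and $\bullet_{[\chi]}$ for $\chi\colon G\twoheadrightarrow G/F$. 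You instead rerun the Gilmer--Heinzer leading-term induction (the same device the paper uses inside the proof of \ref{a90}, (iii)$\Rightarrow$(i')) directly at the level of the coarsening: the $G$-components of an $H$-homogeneous element lie in a single coset of $\ke(\psi)$, the canonical extension of a total order on the torsionfree kernel (\ref{p60}) totally orders each coset compatibly with addition, and extracting the component of $x^k$ of degree $kg_{\max}$ from a finitely generated graded module verifies criterion (2') of \ref{a101}; the argument that this component equals $x_{g_{\max}}^k$ is the same \cite[VI.1.1 Proposition 1]{a}-type reasoning as in \ref{p70}. Your noetherian reduction via the graded Basissatz then yields (2), replacing the paper's appeal, inside \ref{a90}, to \cite[V.1.3 Proposition 12]{ac} together with localization and limit arguments; note that you only need graded noetherianness of $R'$, which is what \ref{p110} and \ref{a10} supply. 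For part b) both proofs hinge on the same idempotent $f=\frac1n\sum_{i=0}^{n-1}e_g^i$; the paper routes it through \ref{a90}, \ref{a91} and \ref{a80}, while you verify directly that $\Z[V]^{(G)}$ is completely integrally closed in $\Q[V]^{(G)}$ (the $e_v$ are units, $\Z[V]$ is integral over $\Z$, and $\Z$ is integrally closed in $\Q$, so transitivity applies) --- same substance, somewhat more self-contained, though your one-line justification compresses the transitivity step and the fact that over the noetherian ring $\Z[V]$ almost integral and integral coincide.

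One point you should make explicit, because it is striking: your proof of a) uses only that $\ke(\psi)$ is torsionfree, not that it lies in a torsionfree direct summand, so combined with b) it would upgrade \ref{a120} to an equivalence. I could not find a gap; the two steps that carry all the weight are the total ordering of each coset supplied by \ref{p60} and the conversion of almost integrality into integrality over the finitely generated graded subring $R'$, and both check out. If this is right, it settles, in the relative setting, the Questions following \ref{a140}: a) affirmatively, hence b) negatively by \ref{a140}. Be aware that your method does not extend to the closures of Section 3, where it genuinely breaks down: $Q(R_{[\psi]})$ carries only an $H$-graduation, so elements of $\inte(R_{[\psi]})$ need not have $G$-homogeneous components to induct on, which is why \ref{4.800} requires the stronger hypotheses. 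Since the paper explicitly leaves the torsionfree-kernel question open, you should state your strengthened hypothesis prominently and verify the two named steps with full care rather than leaving them implicit --- but as written I see no error.
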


\begin{proof}
a) First, we consider the case that $\ke(\psi)$ itself is a torsionfree direct summand of $G$. Let $R$ be a $G$-graded ring, let $S$ be a $G$-graded $R$-algebra, and let $x\in S^{\hom}_{[\psi]}$ be (almost) integral over $R_{[\psi]}$. As $\ke(\psi)$ is a direct summand of $G$ there exists a section $\pi$ of $\psi$ in the category of groups. So, we have a commutative diagram $$\xymatrix{R_{[\psi]}\ar[d]_{j^{\pi}_R}\ar[r]&S_{[\psi]}\ar[d]^{j^{\pi}_S}\\R_{[\psi]}[\ke(\psi)]_{[H]}\ar[r]&S_{[\psi]}[\ke(\psi)]_{[H]}}$$ of $H$-graded rings (\ref{a110}). Since $\ke(\psi)$ is torsionfree it follows $$j^{\pi}_S(x)\in\inte(R_{[\psi]}[\ke(\psi)]_{[H]},S_{[\psi]}[\ke(\psi)]_{[H]})=\inte(R_{[\psi]},S_{[\psi]})[\ke(\psi)]_{[H]}$$ (and similarly for complete integral closures) by \ref{a11} and \ref{a90}. By the construction of $j^{\pi}_S$ this implies $x_g\in\inte(R_{[\psi]},S_{[\psi]})$ (or $x_g\in\cinte(R_{[\psi]},S_{[\psi]})$, resp.) for every $g\in G$, and thus the claim (\ref{a101}).

Next, we consider the general case. Let $F$ be a torsionfree direct summand of $G$ containing $\ke(\psi)$, let $\chi\colon G\twoheadrightarrow G/F$ be the canonical projection and let $\lambda\colon H\twoheadrightarrow G/F$ be the induced epimorphism of groups, so that $\lambda\circ\psi=\chi$. Let $R$ be a $G$-graded ring, and let $S$ be a $G$-graded $R$-algebra. By \ref{a100} and the first paragraph, $$\inte(R_{[\chi]},S_{[\chi]})=\inte(R,S)_{[\chi]}=(\inte(R,S)_{[\psi]})_{[\lambda]}\subseteq$$$$\inte(R_{[\psi]},S_{[\psi]})_{[\lambda]}\subseteq\inte((R_{[\psi]})_{[\lambda]},(S_{[\psi]})_{[\lambda]})=\inte(R_{[\chi]},S_{[\chi]}),$$ hence $(\inte(R,S)_{[\psi]})_{[\lambda]}=\inte(R_{[\psi]},S_{[\psi]})_{[\lambda]}$ and therefore $\inte(R,S)_{[\psi]}=\inte(R_{[\psi]},S_{[\psi]})$ (or the analogous statement for complete integral closures) as desired.

b) Suppose $K\dfgl\ke(\psi)$ is not torsionfree. By \ref{a90} and \ref{a91} there exist a noetherian ring $R$ and an $R$-algebra $S$ such that $R$ is integrally closed in $S$ (hence completely integrally closed in $S$) and that $R[K]_{[0]}$ is not integrally closed in $S[K]_{[0]}$ (hence not completely integrally closed in $S[K]_{[0]}$ (\ref{a20})). Then, $R[K]$ is completely integrally closed in $S[K]$ (\ref{a80}). Extending the $K$-graduations of $R$ and $S$ trivially to $G$-graduations it follows that $R[K]_{[G]}$ is completely integrally closed in $S[K]_{[G]}$, while $(R[K]_{[G]})_{[\psi]}$ is not integrally closed in $(S[K]_{[G]})_{[\psi]}$. This proves the claim.
\end{proof}

\begin{cor}\label{a130}
Let $\psi\colon G\twoheadrightarrow H$ be an epimorphism of groups. If $G$ is torsionfree then $\psi$-coarsening commutes with relative (complete) integral closure.\footnote{In case $H=0$ the statement about relative integral closures is \cite[V.1 Exercice 25]{ac}.}
\end{cor}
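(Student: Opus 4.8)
The plan is to deduce this immediately from Theorem~\ref{a120}~a). That theorem asserts that $\psi$-coarsening commutes with relative (complete) integral closure whenever $\ke(\psi)$ is contained in a torsionfree direct summand of $G$, so the only thing I need to supply is a verification that, under the present hypothesis, such a direct summand exists and contains $\ke(\psi)$.

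The key (and essentially the only) observation is that $G$ is always a direct summand of itself, via the trivial decomposition $G=G\oplus 0$. Since $G$ is torsionfree by hypothesis, $G$ is thus a \emph{torsionfree} direct summand of $G$, and of course $\ke(\psi)\subseteq G$. Taking $F\dfgl G$ in the statement of Theorem~\ref{a120}~a), the hypothesis ``$\ke(\psi)$ is contained in a torsionfree direct summand of $G$'' is satisfied, and the conclusion that $\psi$-coarsening commutes with relative (complete) integral closure follows at once.

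I expect no real obstacle here: the entire content lies in Theorem~\ref{a120}~a), and the corollary is merely the observation that torsionfreeness of $G$ trivially produces the required direct summand. It is worth noting that this recovers, in the special case $H=0$, the statement of \cite[V.1 Exercice 25]{ac} for relative integral closures (as recorded in the footnote), now simultaneously in the graded setting and for complete integral closures as well.

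\begin{proof}
Since $G$ is torsionfree, it is a torsionfree direct summand of itself, and it contains $\ke(\psi)$. The claim therefore follows from \ref{a120}~a).
\end{proof}
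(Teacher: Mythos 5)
Your proposal is correct and matches the paper exactly: the paper's proof is simply ``Immediately from \ref{a120}'', relying on the same observation that a torsionfree $G$ is itself a torsionfree direct summand of $G$ containing $\ke(\psi)$. You have merely spelled out explicitly the trivial verification the paper leaves implicit.
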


\begin{proof}
Immediately from \ref{a120}.
\end{proof}

\begin{no}\label{a140}
Supposing that the torsion subgroup $T$ of $G$ is a direct summand of $G$, it is readily checked that a subgroup $F\subseteq G$ is contained in a torsionfree direct summand of $G$ if and only if the composition of canonical morphisms $T\hookrightarrow G\twoheadrightarrow G/F$ has a retraction.

A torsionfree subgroup $F\subseteq G$ is not necessarily contained in a torsionfree direct summand of $G$, not even if $G$ is of finite type. Using the criterion above one checks that a counterexample is provided by $G=\Z\oplus\Z/n\Z$ for $n\in\N_{>1}$ and $F=\langle(n,\overline{1})\rangle_{\Z}\subseteq G$.
\end{no}

\begin{qu}
Let $\psi\colon G\twoheadrightarrow H$ be an epimorphism of groups. The above result gives rise to the following questions:
\begin{aufz}
\item[a)] \textit{If $\ke(\psi)$ is torsionfree, does $\psi$-coarsening commute with (complete) integral closure?}
\item[b)] \textit{If $\psi$-coarsening commutes with (complete) integral closure, is then $\ke(\psi)$ contained in a torsionfree direct summand of $G$?}
\end{aufz}
Note that, by \ref{a140}, at most one of these questions has a positive answer.
\end{qu}


\section{Integral closures of entire graded rings}

In this section we consider (complete) integral closures of entire graded rings in their graded fields of fractions. We start with the relevant definitions and basic properties.

\begin{no}\label{4.400}
Let $R$ be an entire $G$-graded ring. The (complete) integral closure of $R$ in $Q(R)$ is denoted by $\inte(R)$ (or $\cinte(R)$, resp.) and is called \textit{the (complete) integral closure of $R$.} We say that $R$ is \textit{(completely) integrally closed} if it is (completely) integrally closed in $Q(R)$. Keep in mind that $\inte(R)$ is integrally closed, but that $\cinte(R)$ is not necessarily completely integrally closed (\ref{a20}). If $\psi\colon G\twoheadrightarrow H$ is an epimorphism of groups and $R$ is a $G$-graded ring such that $R_{[\psi]}$ (and hence $R$) is entire, then $Q(R)_{[\psi]}$ is entire and $$R_{[\psi]}\subseteq Q(R)_{[\psi]}\subseteq Q(Q(R)_{[\psi]})=Q(R_{[\psi]}).$$ From \ref{a100} it follows $\inte(R)_{[\psi]}\subseteq\inte(R_{[\psi]})$ and $\cinte(R)_{[\psi]}\subseteq\cinte(R_{[\psi]})$. Hence, if $R_{[\psi]}$ is (completely) integrally closed then so is $R$. We say that \textit{$\psi$-coarsening commutes with (complete) integral closure} if whenever $R$ is an entire $G$-graded ring then $R_{[\psi]}$ is entire and $\inte(R)_{[\psi]}=\inte(R_{[\psi]})$ (or $\cinte(R)_{[\psi]}=\cinte(R_{[\psi]})$, resp.). Clearly, if $\psi$-coarsening commutes with (complete) integral closure then $\ke(\psi)$ is torsionfree (\ref{p90}~b)). Let $F\subseteq G$ be a subgroup. An $F$-graded ring $S$ is entire and (completely) integrally closed if and only if $S^{(G)}$ is so.
\end{no}

\begin{no}\label{lem3}
Let $I$ be a nonempty right filtering preordered set, and let $((R_i)_{i\in I},(\varphi_{ij})_{i\leq j})$ be an inductive system in ${\sf GrAnn}^G$ over $I$ such that $R_i$ is entire for every $i\in I$ and that $\varphi_{ij}$ is a monomorphism for all $i,j\in I$ with $i\leq j$. By \ref{a50} and \ref{p55} we have inductive systems $(\inte(R_i))_{i\in I}$ and $(\cinte(R_i))_{i\in I}$ in ${\sf GrAnn}^G$ over $I$, and we can consider the sub-$\varinjlim_{i\in I}R_i$-algebras $$\varinjlim_{i\in I}\inte(R_i)\subseteq\varinjlim_{i\in I}\cinte(R_i)\subseteq Q(\varinjlim_{i\in I}R_i)$$ and compare them with the sub-$\varinjlim_{i\in I}R_i$-algebras $$\inte(\varinjlim_{i\in I}R_i)\subseteq\cinte(\varinjlim_{i\in I}R_i)\subseteq Q(\varinjlim_{i\in I}R_i).$$ It follows immediately from \ref{a50} and \ref{p55} that $\varinjlim_{i\in I}\inte(R_i)=\inte(\varinjlim_{i\in I}R_i)$. Hence, if $R_i$ is integrally closed for every $i\in I$ then $\varinjlim_{i\in I}R_i$ is integrally closed, and $\varinjlim_{i\in I}\cinte(R_i)\subseteq\cinte(\varinjlim_{i\in I}R_i)$.

Suppose now in addition that $(\varinjlim_{i\in I}R_i)\cap Q(R_i)=R_i$ for every $i\in I$. Then, analogously to \cite[2.1]{and} one sees that $\varinjlim_{i\in I}\cinte(R_i)=\cinte(\varinjlim_{i\in I}R_i)$, hence if $R_i$ is completely integrally closed for every $i\in I$ then so is $\varinjlim_{i\in I}R_i$. This additional hypothesis is fulfilled in case $R$ is a $G$-graded ring, $F$ is a group, $I=\F_F$ (\ref{p25}), and $R_U$ equals $R[U]^{(G\oplus F)}$ or $R[U]_{[G]}$ for $U\in\F_F$.
\end{no}

\begin{no}\label{trans10}
Let $R$, $S$ and $T$ be $G$-graded rings such that $R\subseteq S\subseteq T$ as graded subrings. Clearly, $\cinte(R,S)\subseteq\cinte(R,T)\cap S$. Gilmer and Heinzer found an (ungraded) example showing that this is not necessarily an equality (\cite[Example 2]{gilhei}), not even if $R$, $S$ and $T$ are entire and have the same field of fractions. In \cite[Proposition 2]{gilhei} they also presented the following criterion for this inclusion to be an equality, whose graded variant is proven analogously: If for every $G$-graded sub-$S$-module of finite type $M\subseteq T$ with $R\subseteq M$ there exists a $G$-graded $S$-module $N$ containing $M$ such that $R$ is a direct summand of $N$, then $\cinte(R,S)=\cinte(R,T)\cap S$.

In \cite[Remark 2]{gilhei} Gilmer and Heinzer claim (again in the ungraded case) that this criterion applies if $S$ is principal. As this would be helpful to us later (\ref{f10}, \ref{4.600}) we take the opportunity to point out that it is wrong. Namely, suppose that $S$ is not simple, that $T=Q(S)$, and that the hypothesis of the criterion is fulfilled. Let $x\in S\setminus 0$. There exists an $S$-module $N$ containing $\langle x^{-1}\rangle_S\subseteq T$ such that $S$ is a direct summand of $N$. The tensor product with $S/\langle x\rangle_S$ of the canonical injection $S\hookrightarrow N$ has a retraction, but it also factors over the zero morphism $S/\langle x\rangle_S\rightarrow\langle x^{-1}\rangle_S/S$. This implies $x\in S^*$, yielding the contradiction that $S$ is simple.
\end{no}

Now we consider graded group algebras. We will show that both variants behave well with integral closures and that the finely graded variant behaves also well with complete integral closure.

\begin{thm}\label{fein}
a) Formation of finely graded group algebras over entire $G$-graded rings commutes with (complete) integral closure.

b) If $F$ is a group, then an entire $G$-graded ring $R$ is (completely) integrally closed if and only if $R[F]$ is so.
\end{thm}

\begin{proof}
Keeping in mind that $Q(R)[F]=Q(R[F])$ (\ref{p100}) this follows immediately from \ref{a80}.
\end{proof}

\begin{lemma}\label{f20}
If $R$ is a simple $G$-graded ring then $R[\Z]_{[G]}$ is entire and completely integrally closed.
\end{lemma}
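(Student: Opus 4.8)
The plan is to dispatch entirety immediately and then reduce the complete-closure assertion to the classical fact that a Laurent polynomial ring over a field is completely integrally closed. Entirety of $A\dfgl R[\Z]_{[G]}$ is immediate from (\ref{p100}), since a simple $G$-graded ring is entire and $\Z$ is torsionfree. For the substance I would first record the structure of $A$ coming from simplicity of $R$ (\ref{p50}): here $R_0$ is a field, $\Gamma\dfgl\degsupp(R)$ is a subgroup of $G$, every nonzero element of $R_g$ (for $g\in\Gamma$) is a unit, and $R_g=R_0u_g$ for any chosen $u_g\in R_g\setminus 0$. Writing the canonical basis of $R[\Z]$ as $(e_n)_{n\in\Z}$, this gives $A_0=R_0[e_1,e_1^{-1}]$, the Laurent polynomial ring over the field $R_0$, a principal ideal domain, while for $g\in\Gamma$ one has $A_g=u_gA_0$ with $u_g\in A^*$; correspondingly $Q(A)_0=Q(A_0)=R_0(e_1)$.

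Next I would reduce an arbitrary homogeneous almost-integral element to degree $0$. Let $x\in\cinte(A)^{\hom}$, say of degree $g\in\Gamma$. Since $\cinte(A)$ is a graded sub-$A$-algebra (\ref{a20}) and $u_g\in A^*$, the element $y\dfgl u_g^{-1}x$ again lies in $\cinte(A)$ and is homogeneous of degree $0$, so $y\in Q(A)_0$ is almost integral over $A$. To descend this to $A_0$, take a $G$-graded sub-$A$-module $T\subseteq Q(A)$ of finite type with $A[y]\subseteq T$ (\ref{a10}); choosing homogeneous generators $m_i$ of degrees $d_i$, its degree-$0$ component is $T_0=\sum_iA_{-d_i}m_i$, which is generated over $A_0$ by the finitely many $u_{-d_i}m_i$ (those with $-d_i\in\Gamma$) precisely because each $A_{-d_i}$ is cyclic over $A_0$. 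As every power $y^k$ is homogeneous of degree $0$ and lies in $T$, it lies in $T_0$, so $y$ is almost integral over $A_0$.

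To finish, $A_0=R_0[e_1,e_1^{-1}]$ is a Noetherian integrally closed domain, hence completely integrally closed in $Q(A)_0$, so $y\in A_0$ and therefore $x=u_gy\in u_gA_0=A_g\subseteq A$; this yields $\cinte(A)=A$. The one delicate point — and the only place where simplicity of $R$ is used beyond guaranteeing entirety — is the descent step: finite generation of $T_0$ over $A_0$ rests on each component $A_g$ being cyclic over $A_0$ (equivalently, on $u_g$ being invertible), which is exactly what simplicity provides and what would fail for a merely entire $R$. Everything else is the structure theory of simple graded rings together with the completeness of the integral closure of the Laurent ring over a field.
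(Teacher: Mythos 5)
Your proof is correct, but it takes a genuinely different route from the paper's. The paper stays entirely inside the graded category: following Bourbaki, it establishes a graded Euclidean division in $S=R[\Z]_{[G]}$, deduces that every $G$-graded ideal of $S$ is generated by a single homogeneous element, develops enough graded divisibility theory to write every $x\in Q(S)^{\hom}$ as a coprime fraction $\frac{a}{b}$ with $a,b\in S^{\hom}$, runs the classical ``principal implies integrally closed'' argument in graded form, and finally upgrades from integrally closed to completely integrally closed via noetherianness of $S$ (\ref{p110}, \ref{a20}). You instead exploit the unit structure of the simple ring $R$ to trivialize the graduation: since $A_g=u_gA_0$ with $u_g\in A^*$ for $g\in\degsupp(R)$, you twist any homogeneous almost-integral element into degree zero, descend almost-integrality componentwise through a graded finite-type module $T$ (the cyclicity of each $A_{-d_i}$ over $A_0$ is indeed the crux, and your use of the graded characterisation of almost integrality from \ref{a10} is exactly right), and then quote the ungraded fact that the Laurent ring $R_0[e_1,e_1^{-1}]$ over a field is a Noetherian integrally closed domain, hence completely integrally closed. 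Your normalization $A_g=u_gA_0$ is close in spirit to the paper's Lemma \ref{lem50}, where simplicity is likewise used to choose homogeneous units trivializing part of the graduation, so in effect you have inlined that technique into the proof of this lemma. As for what each approach buys: yours avoids developing the graded division and divisibility theory that the paper only sketches by analogy, and it attacks complete integral closedness head-on rather than through integral closedness plus a noetherian upgrade of the whole ring $A$; the paper's proof, on the other hand, yields as a byproduct that $R[\Z]_{[G]}$ is principal, a structural fact it explicitly reuses in \ref{bem}, which your argument does not provide.
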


\begin{proof}
First we note that $S\dfgl R[\Z]_{[G]}$ is entire (\ref{p100}). The argument in \cite[IV.1.6 Proposition 10]{a} shows that $S$ allows a graded version of euclidean division, i.e., for  $f,g\in S^{\hom}$ with $f\neq 0$ there exist unique $u,v\in S^{\hom}$ with $g=uf+v$ and $\deg_{\Z}(v)<\deg_{\Z}(f)$, where $\deg_{\Z}$ denotes the usual $\Z$-degree of polynomials over $R$. Using this we see analogously to \cite[IV.1.7 Proposition 11]{a} that every $G$-graded ideal of $S$ has a homogeneous generating set of cardinality $1$. Next, developing a graded version of the theory of divisibility in entire rings along the line of \cite[VI.1]{a}, it follows analogously to \cite[VI.1.11 Proposition 9 (DIV); VII.1.2 Proposition 1]{a} that for every $x\in Q(S)^{\hom}$ there exist coprime $a,b\in S^{\hom}$ with $x=\frac{a}{b}$. So, the argument in \cite[V.1.3 Proposition 10]{ac} shows that $S$ is integrally closed. As it is noetherian the claim is proven (\ref{a20}).
\end{proof}

\begin{thm}\label{f10}
a) Formation of coarsely graded algebras of torsionfree groups over entire $G$-graded rings commutes with integral closure.

b) If $F$ is a torsionfree group, then an entire $G$-graded ring $R$ is integrally closed if and only if $R[F]_{[G]}$ is so.\footnote{In case $G=0$ the statement that integral closedness of $R$ implies integral closedness of $R[F]_{[G]}$ is \cite[V.1 Exercice 24]{ac}.}
\end{thm}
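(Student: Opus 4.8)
The plan is to prove part a) — that $\inte(R)[F]_{[G]}=\inte(R[F]_{[G]})$ for entire $R$ and torsionfree $F$ — and to read off b) from it at the same time, exactly as \ref{a80}~b) followed from \ref{a80}~a): integral closedness of $R$ means $\inte(R)=R$ as graded subrings of $Q(R)$, and since the ``coefficient of $e_0$'' projection $Q(R)[F]_{[G]}\rightarrow Q(R)$ sends $M[F]_{[G]}$ onto $M$ for every graded subring $R\subseteq M\subseteq Q(R)$, equality of $\inte(R)[F]_{[G]}$ and $R[F]_{[G]}$ forces $\inte(R)=R$. For a) I would first reduce to $F$ of finite type: writing $F=\varinjlim_{U\in\F_F}U$ (\ref{p25}) we have $R[F]_{[G]}=\varinjlim_U R[U]_{[G]}$ and $\inte(R)[F]_{[G]}=\varinjlim_U\inte(R)[U]_{[G]}$ (\ref{p40}), while $\inte(R[F]_{[G]})=\varinjlim_U\inte(R[U]_{[G]})$ by \ref{lem3} (the system $(R[U]_{[G]})_U$ consists of entire rings with monomorphic transition maps, by \ref{p100}). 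As a finitely generated torsionfree group is free, it remains to treat $F=\Z^n$, and I would induct on $n$, proving a) and then b) at each level.

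The engine of the induction is the associativity identity $R[\Z^n]_{[G]}=(R[\Z]_{[G]})[\Z^{n-1}]_{[G]}$, a special case of $R[F_1\oplus F_2]_{[G]}=(R[F_1]_{[G]})[F_2]_{[G]}$ (checked on underlying rings and on $G$-components via $e_{(a,b)}\leftrightarrow e_ae_b$), together with the key point that $Q(R)[\Z^n]_{[G]}$ is integrally closed. For the latter I write $Q(R)[\Z^n]_{[G]}=(Q(R)[\Z]_{[G]})[\Z^{n-1}]_{[G]}$; since $Q(R)$ is simple, $Q(R)[\Z]_{[G]}$ is (completely, hence) integrally closed by \ref{f20}, and applying part b) for $\Z^{n-1}$ — the induction hypothesis — to this entire ring shows $Q(R)[\Z^n]_{[G]}$ is integrally closed. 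The base case $n=1$ uses only \ref{f20}.

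With this in hand I would run the two inclusions inside $Q(S)$, where $S\dfgl R[\Z^n]_{[G]}$. One first checks $S\subseteq Q(R)[\Z^n]_{[G]}\subseteq Q(S)$ and $Q(Q(R)[\Z^n]_{[G]})=Q(S)$, by clearing denominators from $R^{\hom}\setminus 0\subseteq\nzd(S)$. Then \ref{a90} (applicable since $\Z^n$ is torsionfree) gives $\inte(R)[\Z^n]_{[G]}=\inte(R,Q(R))[\Z^n]_{[G]}=\inte(S,Q(R)[\Z^n]_{[G]})$; by \ref{a11} the right-hand side embeds into $\inte(S,Q(S))=\inte(S)$, which is the inclusion ``$\subseteq$''. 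For ``$\supseteq$'' it suffices, by \ref{a20}, to treat a homogeneous $x\in\inte(S)$: such an $x$ lies in $Q(S)$ and is integral over $S\subseteq Q(R)[\Z^n]_{[G]}$, hence integral over $Q(R)[\Z^n]_{[G]}$; as the latter is integrally closed in $Q(S)$, we get $x\in Q(R)[\Z^n]_{[G]}$, so $x\in\inte(S,Q(R)[\Z^n]_{[G]})=\inte(R)[\Z^n]_{[G]}$. This proves a) for $\Z^n$, and b) for $\Z^n$ follows as in the first paragraph, closing the induction.

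The main obstacle is the absolute (field-of-fractions) nature of the statement: in contrast to the relative case \ref{a90}, here $Q(R[F]_{[G]})$ is strictly larger than $Q(R)[F]_{[G]}$, so the real content is that passing to this larger overring creates no new integral elements. Everything therefore rests on the integral closedness of $Q(R)[F]_{[G]}$ over the simple base $Q(R)$, whose only genuinely nontrivial input is Lemma \ref{f20}. The delicate part is the bootstrap that upgrades the single-variable Lemma \ref{f20} to the $\Z^n$-case through the associativity identity; one must take care that the induction is non-circular, invoking b) for rank $n-1$ (on the entire, non-simple ring $Q(R)[\Z]_{[G]}$) precisely to secure the integral closedness needed for rank $n$.
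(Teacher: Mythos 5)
Your proposal is correct and follows essentially the paper's own route: reduction to finitely generated, hence free, $F$ via \ref{p25}, \ref{p40} and \ref{lem3}, induction on the rank through the associativity $R[\Z^n]_{[G]}=(R[\Z]_{[G]})[\Z^{n-1}]_{[G]}$, Lemma \ref{f20} as the sole nontrivial input, and the sandwich argument $\inte(S)\subseteq\inte(Q(R)[F]_{[G]},Q(S))=Q(R)[F]_{[G]}$ combined with the relative result \ref{a90} (noting, as you do, that for \emph{integral} closure the identity $\inte(A,T)\cap S=\inte(A,S)$ is automatic, unlike the complete-integral-closure situation of \ref{trans10}). The only difference is bookkeeping: the paper uses the rank induction to reduce the whole statement to $F=\Z$ (applying the rank-one case over the base $R[\Z^{n-1}]_{[G]}$), whereas you run the main argument at rank $n$ and invoke the induction, via b) at rank $n-1$ over $Q(R)[\Z]_{[G]}$, only to secure integral closedness of $Q(R)[\Z^n]_{[G]}$ --- a logically equivalent arrangement.
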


\begin{proof}
It suffices to prove the first claim. We can without loss of generality suppose that $F$ is of finite type, hence free of finite rank (\ref{p25}, \ref{p40}, \ref{lem3}). By induction on the rank of $F$ we can furthermore suppose $F=\Z$. We have $\inte(R[\Z]_{[G]})\cap Q(R)[\Z]_{[G]}=\inte(R[\Z]_{[G]},Q(R)[\Z]_{[G]})$. Since $Q(R)[\Z]_{[G]}$ is integrally closed (\ref{f20}) we get $$\inte(R[\Z]_{[G]})\subseteq\inte(Q(R)[\Z]_{[G]},Q(R[\Z]_{[G]}))=Q(R)[\Z]_{[G]}.$$ It follows $$\inte(R[\Z]_{[G]})=\inte(R[\Z]_{[G]})\cap Q(R)[\Z]_{[G]}=\inte(R[\Z]_{[G]},Q(R)[\Z]_{[G]})=\inte(R)[\Z]_{[G]}$$ (\ref{a90}) and thus the claim.
\end{proof}

\begin{no}\label{bem}
Let $F$ be a torsionfree group, let $R$ be an entire $G$-graded ring, and suppose that $\cinte(R[\Z]_{[G]})\cap Q(R)[\Z]_{[G]}=\cinte(R[\Z]_{[G]},Q(R)[\Z]_{[G]})$. Then, the same argument as in \ref{f10} (keeping in mind \ref{lem3}) yields $\cinte(R)[F]_{[G]}=\cinte(R[F]_{[G]})$, hence $R$ is completely integrally closed if and only if $R[F]_{[G]}$ is so. However, although $R[\Z]_{[G]}$ is principal by the proof of \ref{f20}, we have seen in \ref{trans10} that it is unclear whether $\cinte(R[\Z]_{[G]})\cap Q(R)[\Z]_{[G]}$ and $\cinte(R[\Z]_{[G]},Q(R)[\Z]_{[G]})$ are equal in general. 
\end{no}

\begin{cor}\label{lem40}
a) Formation of coarsely graded algebras of torsionfree groups over noetherian entire $G$-graded rings commutes with complete integral closure.

b) If $F$ is a torsionfree group, then a noetherian entire $G$-graded ring $R$ is completely integrally closed if and only if $R[F]_{[G]}$ is so.
\end{cor}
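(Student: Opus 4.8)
The plan is to imitate the proofs of \ref{fein} and \ref{f10}: I would first reduce to part a), since part b) then drops out by the usual argument. Indeed, if $R$ is completely integrally closed then $\cinte(R)=R$, so a) gives $\cinte(R[F]_{[G]})=\cinte(R)[F]_{[G]}=R[F]_{[G]}$; conversely, if $R[F]_{[G]}$ is completely integrally closed then $\cinte(R)[F]_{[G]}=\cinte(R[F]_{[G]})=R[F]_{[G]}$, and comparing the coefficients of $e_0$ forces $\cinte(R)=R$. So I would fix a noetherian entire $G$-graded ring $R$ and a torsionfree group $F$ and aim to prove $\cinte(R[F]_{[G]})=\cinte(R)[F]_{[G]}$.

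The crucial observation is that $R[F]_{[G]}$ need not itself be noetherian --- by \ref{p110} that would require $F$ to be of finite type --- so I cannot apply \ref{a20} to it directly; but each finite-type truncation is noetherian. Concretely, for $U\in\F_F$ the ring $R[U]_{[G]}$ is noetherian (\ref{p110}), hence $\cinte(R[U]_{[G]})=\inte(R[U]_{[G]})$ (\ref{a20}). Since $U\subseteq F$ is torsionfree, \ref{f10} gives $\inte(R[U]_{[G]})=\inte(R)[U]_{[G]}$, and since $R$ is noetherian, $\inte(R)=\cinte(R)$ (\ref{a20}). Thus $\cinte(R[U]_{[G]})=\cinte(R)[U]_{[G]}$ for every $U\in\F_F$.

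To finish I would pass to the limit over $\F_F$. The decisive tool is \ref{lem3}, whose additional hypothesis is precisely the one recorded there for the system $(R[U]_{[G]})_{U\in\F_F}$; it yields $\cinte(R[F]_{[G]})=\varinjlim_{U\in\F_F}\cinte(R[U]_{[G]})$. Combining this with the previous paragraph and with $\cinte(R)[F]_{[G]}=\varinjlim_{U\in\F_F}\cinte(R)[U]_{[G]}$ (\ref{p40}) gives $$\cinte(R[F]_{[G]})=\varinjlim_{U\in\F_F}\cinte(R[U]_{[G]})=\varinjlim_{U\in\F_F}\cinte(R)[U]_{[G]}=\cinte(R)[F]_{[G]},$$ as desired.

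The hard part is conceptual rather than computational: in general complete integral closure does not satisfy the transitivity equality $\cinte(R[\Z]_{[G]})\cap Q(R)[\Z]_{[G]}=\cinte(R[\Z]_{[G]},Q(R)[\Z]_{[G]})$ needed to run the argument of \ref{f10}, and \ref{trans10} shows even the principal case is unclear, which is exactly the gap left open in \ref{bem}. The role of the noetherian hypothesis is to collapse $\cinte$ into $\inte$ --- where that equality holds freely --- but this collapse is only legitimate after the reduction to the noetherian rings $R[U]_{[G]}$ via \ref{lem3}. The one step I would be careful not to shortcut is this reduction: applying \ref{a20} to $R[F]_{[G]}$ itself would be invalid whenever $F$ is not of finite type.
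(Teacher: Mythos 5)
Your proof is correct and is essentially the paper's own argument: the paper's ``without loss of generality suppose that $F$ is of finite type (\ref{p25}, \ref{p40}, \ref{lem3})'' is exactly the limit computation you spell out, after which noetherianness of the coarsely graded algebra of a finite-type group (\ref{p110}) together with \ref{a20} and \ref{f10} settles the finite-type case just as you do for each $U\in\F_F$. You have merely made explicit what the paper leaves implicit, namely the verification of the additional hypothesis of \ref{lem3} for the system $(R[U]_{[G]})_{U\in\F_F}$ and the coefficient-of-$e_0$ comparison deducing b) from a).
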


\begin{proof}
We can without loss of generality suppose that $F$ is of finite type (\ref{p25}, \ref{p40}, \ref{lem3}). Then, $R[F]_{[G]}$ is noetherian (\ref{p110}), and the claim follows from \ref{f10} and \ref{a20}.
\end{proof}

In the rest of this section we study the behaviour of (complete) integral closures under arbitrary coarsening functors, also using the results from Section 2.

\begin{prop}\label{4.500}
Let $\psi\colon G\twoheadrightarrow H$ be an epimorphism of groups.

a) $\psi$-coarsening commutes with integral closure if and only if a $G$-graded ring $R$ is entire and integrally closed if and only if $R_{[\psi]}$ is so.

b) If $\psi$-coarsening commutes with complete integral closure, then a $G$-graded ring $R$ is entire and completely integrally closed if and only if $R_{[\psi]}$ is so.
\end{prop}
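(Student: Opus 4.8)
The plan is to treat a) as a genuine equivalence and b) as a single implication, the absence of a converse in b) being forced by the fact that $\cinte(R)$ need not be completely integrally closed (\ref{a20}). Throughout I identify an entire $R$ with its image in $Q(R)$, so that ``$R$ integrally closed'' means $\inte(R)=R$ and ``$R$ completely integrally closed'' means $\cinte(R)=R$.

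Both the whole of b) and the direction ``commutes $\Rightarrow$ equivalence'' of a) rest on the same bookkeeping. Suppose $\psi$-coarsening commutes with (complete) integral closure and let $R$ be a $G$-graded ring. If $R$ is entire and (completely) integrally closed, then $\inte(R)=R$ (resp. $\cinte(R)=R$), and the commuting hypothesis yields that $R_{[\psi]}$ is entire with $\inte(R_{[\psi]})=\inte(R)_{[\psi]}=R_{[\psi]}$ (resp. $\cinte(R_{[\psi]})=R_{[\psi]}$), so $R_{[\psi]}$ is (completely) integrally closed. Conversely, if $R_{[\psi]}$ is entire and (completely) integrally closed, then $R$ is entire by \ref{p50} and (completely) integrally closed by the inclusion $\inte(R)_{[\psi]}\subseteq\inte(R_{[\psi]})$ (resp. $\cinte(R)_{[\psi]}\subseteq\cinte(R_{[\psi]})$) recorded in \ref{4.400}; note this converse uses no commuting hypothesis. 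This finishes b) and one direction of a).

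For the direction ``equivalence $\Rightarrow$ commutes'' of a), assume that for every $G$-graded ring $R$ one has $R$ entire and integrally closed if and only if $R_{[\psi]}$ is. I would first show $\ke(\psi)$ is torsionfree: otherwise \ref{p90}~b) furnishes a simple $G$-graded ring $R$ with $R_{[\psi]}$ not entire, and since a simple ring is entire and integrally closed ($Q(R)=R$) this contradicts the assumed equivalence. Hence $\psi$-coarsening preserves entirety (\ref{p90}~b)), which supplies the ``$R_{[\psi]}$ entire'' half of the definition of commuting. It remains to prove $\inte(R)_{[\psi]}=\inte(R_{[\psi]})$ for entire $R$, where ``$\subseteq$'' is \ref{4.400}. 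For ``$\supseteq$'', note that $\inte(R)$ is entire (a graded subring of the simple, hence entire, ring $Q(R)$) and integrally closed (\ref{a20}); applying the assumed equivalence to $\inte(R)$ shows $\inte(R)_{[\psi]}$ is entire and integrally closed. Since
$$R_{[\psi]}\subseteq\inte(R)_{[\psi]}\subseteq Q(R)_{[\psi]}\subseteq Q(R_{[\psi]})$$
and every entire graded ring between $R_{[\psi]}$ and $Q(R_{[\psi]})$ has field of fractions $Q(R_{[\psi]})$, integral closedness of $\inte(R)_{[\psi]}$ means $\inte(\inte(R)_{[\psi]},Q(R_{[\psi]}))=\inte(R)_{[\psi]}$, whence $\inte(R_{[\psi]})=\inte(R_{[\psi]},Q(R_{[\psi]}))\subseteq\inte(\inte(R)_{[\psi]},Q(R_{[\psi]}))=\inte(R)_{[\psi]}$, the middle inclusion holding because elements integral over $R_{[\psi]}$ are integral over the larger $\inte(R)_{[\psi]}$.

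The step I expect to be the crux, and the reason a) and b) diverge, is this last one. In the relative situation of \ref{a101} the overring is fixed, but here coarsening enlarges the overring from $Q(R)_{[\psi]}$ to $Q(R_{[\psi]})$, so before reinterpreting ``$\inte(R)_{[\psi]}$ is integrally closed'' as integral closedness inside $Q(R_{[\psi]})$ I must verify that $R_{[\psi]}$, $\inte(R)_{[\psi]}$ and $Q(R)_{[\psi]}$ genuinely share the field of fractions $Q(R_{[\psi]})$, using the chain from \ref{4.400}. This reduction hinges on $\inte(R)$ being integrally closed; since $\cinte(R)$ need not be completely integrally closed, the analogous argument for complete integral closures collapses, which is precisely why b) is stated only as one implication.
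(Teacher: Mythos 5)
Your proof is correct and takes essentially the same route as the paper's: the unconditional direction via \ref{p50} and \ref{4.400}, entirety of $R_{[\psi]}$ by applying the hypothesis to simple rings together with \ref{p90}~b), and then the chain $\inte(R_{[\psi]})=\inte(R_{[\psi]},Q(R_{[\psi]}))\subseteq\inte(\inte(R)_{[\psi]},Q(R_{[\psi]}))=\inte(R)_{[\psi]}$ using that $\inte(R)$ is integrally closed (\ref{a20}) and that $Q(Q(R)_{[\psi]})=Q(R_{[\psi]})$. Your closing diagnosis of why b) has no converse by this argument is exactly the paper's remark following its proof.
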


\begin{proof}
If $\psi$-coarsening commutes with (complete) integral closure then it is clear that a $G$-graded ring $R$ is entire and (completely) integrally closed if and only if $R_{[\psi]}$ is so. Conversely, suppose that $\psi$-coarsening preserves the property of being entire and integrally closed. Let $R$ be an entire $G$-graded ring. Since simple $G$-graded rings are entire and integrally closed, $R_{[\psi]}$ is entire (\ref{p90}~b)). As $\inte(R)$ is integrally closed (\ref{a20}) the same is true for $\inte(R)_{[\psi]}$, implying $$\inte(R_{[\psi]})=\inte(R_{[\psi]},Q(R_{[\psi]}))\subseteq\inte(\inte(R)_{[\psi]},Q(R_{[\psi]}))=$$$$\inte(\inte(R)_{[\psi]})=\inte(R)_{[\psi]}\subseteq\inte(R_{[\psi]})$$ (\ref{4.400}) and thus the claim.
\end{proof}

The argument used in a) cannot be used to prove the converse of b), as $\cinte(R)$ is not necessarily completely integrally closed (\ref{4.400}).

\begin{prop}\label{4.600}
Let $\psi\colon G\twoheadrightarrow H$ be an epimorphism of groups. Suppose that $\psi$-coarsening commutes with relative integral closure and maps simple $G$-graded rings to entire and integrally closed $H$-graded rings. Then, $\psi$-coarsening commutes with integral closure.
\end{prop}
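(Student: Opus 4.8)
The plan is to verify the definition of commuting with integral closure from \ref{4.400}: given an entire $G$-graded ring $R$, one must show that $R_{[\psi]}$ is entire and that $\inte(R)_{[\psi]}=\inte(R_{[\psi]})$. The difficulty is the one flagged in the introduction, namely that $Q(R)_{[\psi]}$ is in general strictly smaller than $Q(R_{[\psi]})$, so that $\inte(R_{[\psi]})=\inte(R_{[\psi]},Q(R_{[\psi]}))$ is formed in a larger overring than the one to which the relative-commuting hypothesis directly applies. The key idea will be to cut $Q(R_{[\psi]})$ back down to $Q(R)_{[\psi]}$ using the integral closedness supplied by the second hypothesis.

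First I would settle entirety. Since $Q(R)$ is simple (\ref{p50}) and $\psi$-coarsening maps simple $G$-graded rings to entire $H$-graded rings by hypothesis, $Q(R)_{[\psi]}$ is entire; as $R_{[\psi]}$ is a graded subring of $Q(R)_{[\psi]}$, it is entire as well. Then \ref{4.400} furnishes the tower $R_{[\psi]}\subseteq Q(R)_{[\psi]}\subseteq Q(R_{[\psi]})$ together with $Q(Q(R)_{[\psi]})=Q(R_{[\psi]})$. In particular $Q(R)_{[\psi]}$ has graded field of fractions $Q(R_{[\psi]})$, and by the second hypothesis it is moreover integrally closed, so that $\inte(Q(R)_{[\psi]},Q(R_{[\psi]}))=Q(R)_{[\psi]}$.

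The heart of the argument is the identity $\inte(R_{[\psi]},Q(R_{[\psi]}))=\inte(R_{[\psi]},Q(R)_{[\psi]})$. The inclusion $\supseteq$ is trivial. For $\subseteq$, take a homogeneous element $x$ of $Q(R_{[\psi]})$ integral over $R_{[\psi]}$; such elements generate the left-hand side by \ref{a20}. Since $R_{[\psi]}\subseteq Q(R)_{[\psi]}$, the element $x$ is a fortiori integral over $Q(R)_{[\psi]}$, hence lies in $\inte(Q(R)_{[\psi]},Q(R_{[\psi]}))=Q(R)_{[\psi]}$. Thus $x$ is a homogeneous element of $Q(R)_{[\psi]}$ integral over $R_{[\psi]}$, i.e.\ $x\in\inte(R_{[\psi]},Q(R)_{[\psi]})$, which gives the claimed inclusion.

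Finally I would invoke the relative-commuting hypothesis (\ref{a100}) applied to the $G$-graded $R$-algebra $Q(R)$, obtaining $\inte(R_{[\psi]},Q(R)_{[\psi]})=\inte(R,Q(R))_{[\psi]}=\inte(R)_{[\psi]}$. Chaining the three equalities yields $\inte(R_{[\psi]})=\inte(R_{[\psi]},Q(R_{[\psi]}))=\inte(R_{[\psi]},Q(R)_{[\psi]})=\inte(R)_{[\psi]}$, which is exactly what commuting with integral closure means. The step I expect to be the genuine obstacle is the middle equality of the previous paragraph: it is precisely here that integral closedness of $Q(R)_{[\psi]}$ is indispensable, since without it one could not guarantee that elements integral over $R_{[\psi]}$ remain inside $Q(R)_{[\psi]}$ rather than spreading out into the larger ring $Q(R_{[\psi]})$.
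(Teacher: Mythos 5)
Your proof is correct and follows essentially the same route as the paper's: both confine $\inte(R_{[\psi]})=\inte(R_{[\psi]},Q(R_{[\psi]}))$ inside $Q(R)_{[\psi]}$ via the monotonicity inclusion $\inte(R_{[\psi]},Q(R_{[\psi]}))\subseteq\inte(Q(R)_{[\psi]},Q(R_{[\psi]}))=Q(R)_{[\psi]}$, using $Q(Q(R)_{[\psi]})=Q(R_{[\psi]})$ from \ref{4.400} and the integral closedness of the coarsened simple ring $Q(R)_{[\psi]}$, and then apply the relative-commuting hypothesis to the $R$-algebra $Q(R)$ to conclude $\inte(R_{[\psi]},Q(R)_{[\psi]})=\inte(R)_{[\psi]}$. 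The only cosmetic difference is how entirety of $R_{[\psi]}$ is obtained: you deduce it directly from the hypothesis applied to the simple ring $Q(R)$ together with the graded-subring observation, whereas the paper routes through \ref{a120}~b) and \ref{p90}~b); both are valid, and yours is if anything slightly more self-contained.
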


\begin{proof}
If $R$ is an entire $G$-graded ring, then $R_{[\psi]}$ is entire (\ref{a120}~b), \ref{p90}~b)) and $Q(R)_{[\psi]}$ is integrally closed, and as $Q(Q(R)_{[\psi]})=Q(R_{[\psi]})$ (\ref{4.400}) it follows $$\inte(R_{[\psi]})=\inte(R_{[\psi]},Q(R_{[\psi]}))\subseteq\inte(Q(R)_{[\psi]},Q(R_{[\psi]}))=\inte(Q(R)_{[\psi]})=Q(R)_{[\psi]},$$ hence $\inte(R_{[\psi]})=\inte(R_{[\psi]},Q(R)_{[\psi]})=\inte(R,Q(R))_{[\psi]}=\inte(R)_{[\psi]}$.
\end{proof}

\begin{no}\label{4.601}
We have seen in \ref{trans10} that it is (in the notations of the proof of \ref{4.600}) not clear that $\cinte(R_{[\psi]})\subseteq Q(R)_{[\psi]}$ implies $\cinte(R_{[\psi]})=\cinte(R_{[\psi]},Q(R)_{[\psi]})$. Therefore, the argument from that proof cannot be used to get an analogous result for complete integral closures.
\end{no}

\begin{lemma}\label{lem50}
Let $F$ be a free direct summand of $G$, let $H$ be a complement of $F$ in $G$, let $\psi\colon G\twoheadrightarrow H$ be the canonical projection, let $R$ be a simple $G$-graded ring, and suppose that $\psi(\degsupp(R))\subseteq\degsupp(R)$. Then, $R_{[\psi]}\cong R_{(H)}[\degsupp(R)\cap F]_{[H]}$ in ${\sf GrAnn}^H$.
\end{lemma}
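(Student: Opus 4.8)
The plan is to exhibit an explicit isomorphism. Throughout write $D\dfgl\degsupp(R)$ and recall from \ref{p50} that, $R$ being simple, every nonzero homogeneous element of $R$ is invertible; in particular $D$ is a subgroup of $G$, since $0\in D$ and, for $g,g'\in D$, choosing invertible elements of $R_g$ and $R_{g'}$ shows $-g,g+g'\in D$. First I would unwind the hypothesis $\psi(\degsupp(R))\subseteq\degsupp(R)$: writing each $g\in G$ uniquely as $g=a+h$ with $a\in F$ and $h\in H$, one has $\psi(g)=h$, so $\psi(D)=D\cap H$, and for $g=a+h\in D$ the component $h=\psi(g)$ lies in $D$, whence $a=g-h\in D\cap F$. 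This yields the internal decomposition $D=(D\cap F)\oplus(D\cap H)$. Set $M\dfgl D\cap F=\degsupp(R)\cap F$; it is a subgroup of the free abelian group $F$, hence itself free abelian, and it is the (cancellable) monoid occurring in the statement, its group of differences being $M$ itself.

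The heart of the argument is to lift $M$ multiplicatively into $R$. I would fix a basis $(b_i)_{i\in I}$ of the free abelian group $M$ and, for each $i$, choose a nonzero (hence, by simplicity, invertible) element $t_i\in R_{b_i}$. Since $R$ is commutative, setting $t_a\dfgl\prod_i t_i^{n_i}$ for $a=\sum_i n_i b_i\in M$ is unambiguous and defines a multiplicative section $t\colon M\to R^*$, with $t_0=1$, $t_{a+a'}=t_at_{a'}$, and $t_a\in R_a$ for every $a\in M$. This is where freeness of $F$ (together with commutativity) is essential, and it is the main obstacle: for a monoid with torsion no such section exists in general (e.g.\ $\mathbb{R}\hookrightarrow\mathbb{C}$, viewed $\Z/2$-graded), which is exactly why the corresponding coarsening fails to split off a group algebra. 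Using the section I would define $\Phi\colon R_{(H)}[M]\to R$ on the canonical basis by $\Phi(re_a)=rt_a$ for $r\in R_{(H)}$ and $a\in M$, extended additively (\ref{p30}, \ref{p40}); multiplicativity of $t$ together with commutativity of $R$ makes $\Phi$ a morphism of rings with $\Phi(1)=1$.

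It remains to check that $\Phi$ is an isomorphism of $H$-graded rings from $R_{(H)}[M]_{[H]}$ onto $R_{[\psi]}$; here one only has to track degrees (\ref{p40}, \ref{p20}). The degree-$h$ component of $R_{(H)}[M]_{[H]}$ is $\bigoplus_{a\in M}R_he_a$, and $\Phi$ sends $R_he_a$ into $R_hR_a\subseteq R_{a+h}\subseteq(R_{[\psi]})_h$, since $\psi(a+h)=h$; thus $\Phi$ is degree-preserving. For bijectivity I would argue componentwise: for fixed $a\in M$ and $h\in H$, multiplication by the invertible element $t_a$ carries $R_h=R_{0+h}$ isomorphically onto $R_{a+h}$, and as $(a,h)$ ranges over $M\times H$ the degrees $a+h$ are pairwise distinct because $F\cap H=0$. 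Hence $\Phi$ restricts to isomorphisms on graded pieces, and by the decomposition $D=M\oplus(D\cap H)$ the images $\bigoplus_{a\in M,\,h\in H}R_{a+h}$ exhaust $\bigoplus_{d\in D}R_d=R$ (the summands with $a+h\notin D$ being zero). Therefore $\Phi$ is a bijective degree-preserving ring morphism, i.e.\ an isomorphism in ${\sf GrAnn}^H$, which proves $R_{[\psi]}\cong R_{(H)}[\degsupp(R)\cap F]_{[H]}$.
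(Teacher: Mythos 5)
Your proof is correct and follows essentially the same route as the paper's: both choose a basis of the free group $D\cap F$, lift its elements to invertible homogeneous elements of the simple ring $R$, extend multiplicatively to a section $y\colon D\cap F\to R^*$, and use it to define the graded morphism $R_{(H)}[D\cap F]_{[H]}\to R_{[\psi]}$, with the hypothesis $\psi(D)\subseteq D$ entering at the same point (your decomposition $D=(D\cap F)\oplus(D\cap H)$ corresponds to the paper's observation that $\chi(g)\notin D$ forces $g\notin D$). The only cosmetic difference is that the paper verifies bijectivity by writing down an explicit two-sided inverse $q$, whereas you argue componentwise via invertibility of the lifted elements.
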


\begin{proof}
We set $D\dfgl\degsupp(R)$. As $F$ is free the same is true for $D\cap F$. Let $E$ be a basis of $D\cap F$. If $e\in E$ then $R_e\neq 0$, so that we can choose $y_e\in R_e\setminus 0\subseteq R^*$. For $f\in D\cap F$ there exists a unique family $(r_e)_{e\in E}$ of finite support in $\Z$ with $f=\sum_{e\in E}r_ee$, and we set $y_f\dfgl\prod_{e\in E}y_e^{r_e}\in R_f\setminus 0$; in case $f\in E$ we recover the element $y_f$ defined above.

As $(R_{(H)})_{[0]}$ is a subring of $R_{[0]}$ there exists a unique morphism of $(R_{(H)})_{[0]}$-algebras $p\colon R_{(H)}[D\cap F]_{[0]}\rightarrow R_{[0]}$ with $p(e_f)=y_f$ for $f\in D\cap F$. If $h\in H$, then for $f\in D\cap F$ and $x\in R_h$ we have $p(xe_f)=xy_f\in R_{h+f}\subseteq(R_{[\psi]})_h$, so that $p((R_{(H)}[D\cap F]_{[H]})_h)\subseteq(R_{[\psi]})_h$, and therefore we have a morphism $p\colon R_{(H)}[D\cap F]_{[H]}\rightarrow R_{[\psi]}$ in ${\sf GrAnn}^H$.

Let $\chi\colon G\twoheadrightarrow F$ denote the canonical projection. For $g\in G$ with $\chi(g)\in D$ there is a morphism of groups $$\textstyle q_g\colon R_g\rightarrow R_{(H)}[D\cap F],\;x\mapsto\frac{x}{y_{\chi(g)}}e_{\chi(g)},$$ and for $g\in G$ with $\chi(g)\notin D$ we denote by $q_g$ the zero morphism of groups $R_g\rightarrow R_{(H)}[D\cap F]$. For $h\in H$ the morphisms $q_g$ with $g\in\psi^{-1}(h)$ induce a morphism of groups $q_h\colon(R_{[\psi]})_h\rightarrow R_{(H)}[D\cap F]$. So, we get a morphism of groups $$q\dfgl\bigoplus_{h\in H}q_h\colon R_{[\psi]}\rightarrow R_{(H)}[D\cap F].$$

Let $g\in G$ and $x\in R_g$. If $\chi(g)\notin D$ then $g\notin D$, hence $x=0$, and therefore $p(q(x))=x$. Otherwise, $p(q(x))=p(\frac{x}{y_{\chi(g)}}e_{\chi(g)})=\frac{x}{y_{\chi(g)}}p(e_{\chi(g)})=\frac{x}{y_{\chi(g)}}y_{\chi(g)}=x$. This shows that $q$ is a right inverse of $p$. If $x\in R_{(H)}$ then $q(p(x))=x$, and if $f\in D\cap F$ then $q(p(e_f))=q(y_f)=\frac{y_f}{y_f}e_f=e_f$, hence $q$ is a left inverse of $p$. Therefore, $q$ is an inverse of $p$, and thus $p$ is an isomorphism.
\end{proof}

\begin{prop}\label{4.700}
Let $\psi\colon G\twoheadrightarrow H$ be an epimorphism of groups, let $R$ be a simple $G$-graded ring, and suppose that one of the following conditions is fulfilled:
\begin{aufz}
\item[i)] $G$ is torsionfree;
\item[ii)] $\ke(\psi)$ is contained in a torsionfree direct summand of $G$ and $R$ has full support.
\end{aufz}
Then, $R_{[\psi]}$ is entire and completely integrally closed.
\end{prop}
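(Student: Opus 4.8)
The plan is to reduce both cases to the coarsening of a simple ring by a projection whose kernel is a torsionfree direct summand, and then to recognise the result as a coarsely graded group algebra of a torsionfree group over a simple ring, which is completely integrally closed by \ref{lem40}. Entirety of $R_{[\psi]}$ is immediate in both cases: $\ke(\psi)$ is torsionfree (being a subgroup of the torsionfree group $G$ in case i), and of a torsionfree direct summand of $G$ in case ii)), so since $R$ is simple, hence entire, \ref{p90}~b) yields that $R_{[\psi]}$ is entire. It thus remains to prove complete integral closedness.

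First I would treat the following master case: $R$ has full support and $K\dfgl\ke(\psi)$ is contained in a torsionfree direct summand $F$ of $G$. Replacing $H$ by $G/K$ we may assume that $\psi$ is the canonical projection. Choosing a complement $F'$ with $G=F\oplus F'$, let $p\colon G\twoheadrightarrow F'$ denote the projection; since $\ke(p)=F\supseteq K$ we have $p=\overline{p}\circ\psi$ for an epimorphism $\overline{p}\colon G/K\twoheadrightarrow F'$, hence $R_{[p]}=(R_{[\psi]})_{[\overline{p}]}$ by \ref{p20}. If I can show that $R_{[p]}$ is entire and completely integrally closed, then, $R_{[\psi]}$ being entire and $(R_{[\psi]})_{[\overline{p}]}=R_{[p]}$, it will follow from \ref{4.400} that $R_{[\psi]}$ is completely integrally closed.

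The core is therefore to show that $R_{[p]}$ is completely integrally closed. Writing $F=\varinjlim_{V\in\F_F}V$ with each $V$ free of finite rank (\ref{p25}), I get $G=\varinjlim_V(V\oplus F')$, hence $R=\varinjlim_V(R_{(V\oplus F')})^{(G)}$ (\ref{p30}) and so $R_{[p]}=\varinjlim_V R_V$ with $R_V\dfgl((R_{(V\oplus F')})^{(G)})_{[p]}$, as $p$-coarsening commutes with inductive limits (\ref{p20}). For each $V$, applying \ref{lem50} to the free direct summand $V$ of $V\oplus F'$ with complement $F'$ (using full support, so that $\degsupp(R)\cap V=V$) identifies $R_V$ with $R_{(F')}[V]_{[F']}$, which is entire (\ref{p100}) and, since $R_{(F')}$ is simple (\ref{p50}) and hence noetherian, completely integrally closed (\ref{lem40}). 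The transition maps of this system are injective, so I would conclude by \ref{lem3}: its additional hypothesis $R_{[p]}\cap Q(R_V)=R_V$ holds here, verified exactly as in the fulfilled case cited in \ref{lem3} by a support argument relative to a total ordering of the torsionfree group $F$ (\cite[II.11.4 Lemme 1]{a}). Thus $R_{[p]}$ is completely integrally closed, settling the master case.

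Finally I would deduce the two cases. Case ii) is literally the master case. For case i), set $D\dfgl\degsupp(R)$, a subgroup of the torsionfree group $G$ and hence torsionfree; since $R_g=0$ for $g\notin D$ we have $R_{[\psi]}=((R_{(D)})_{[\psi|_D]})^{(H)}$, so by the last assertion of \ref{4.400} it suffices to show that $(R_{(D)})_{[\psi|_D]}$ is completely integrally closed. But $R_{(D)}$ is simple (\ref{p50}) with full support in $D$, and $\ke(\psi|_D)\subseteq D$, where $D$ is a torsionfree direct summand of itself, so this is again the master case. The main obstacle is the core step: upgrading \ref{lem50} from a free summand to the possibly non-free torsionfree summand $F$. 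This forces the passage to the finite-type system and the verification of the additional hypothesis of \ref{lem3}, and it cannot be circumvented by a single isomorphism $R_{[p]}\cong R_{(F')}[F]_{[F']}$, since no global multiplicative section $F\to R^*$ need exist when $F$ is not free.
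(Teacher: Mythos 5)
Your proof is correct and follows essentially the same route as the paper's: reduce via \ref{4.400} to the case where the kernel is itself a torsionfree direct summand, pass to the inductive limit over finite-type (hence free) subgroups of the kernel via \ref{lem3}, identify each term as a coarsely graded group algebra over a simple ring via \ref{lem50}, and conclude with \ref{lem40}. The only deviations are organizational and, if anything, in your favour: you handle case i) by first restricting to the support subgroup $D=\degsupp(R)$ and extending back by zero, so that \ref{lem50} is needed only in the full-support situation (the paper instead applies \ref{lem50} with the possibly proper subgroup $D\cap K$ and checks $\psi(D)\subseteq D$ directly in both cases), and you explicitly verify the additional hypothesis $(\varinjlim_{i}R_i)\cap Q(R_i)=R_i$ of \ref{lem3} that the \emph{complete} integral closure statement requires at the limit step --- a point the paper's proof passes over (it invokes \ref{lem3} there only for ``integrally closed''), and likewise your citation of \ref{lem40} is the precise one where the paper cites \ref{f10}, which strictly concerns integral closure and must be combined with noetherianness (\ref{p110}, \ref{a20}).
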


\begin{proof}
First, we note that $R_{[\psi]}$ is entire (\ref{p90}~b)). In case i) it suffices to show that $R_{[0]}$ is integrally closed, so we can replace $H$ with $0$ and hence suppose $\ke(\psi)=G$. In case ii), by the same argument as in the proof of \ref{a120} (and keeping in mind \ref{4.400}) we can suppose without loss of generality that $K\dfgl\ke(\psi)$ itself is a torsionfree direct summand of $G$ and hence consider $H$ as a complement of $K$ in $G$. In both cases, as $K=\varinjlim_{L\in\F_K}L$ (\ref{p25}) we have $G=K\oplus H=\varinjlim_{L\in\F_K}(L\oplus H)$, hence $R=\varinjlim_{L\in\F_K}((R_{(U\oplus H)})^{(G)})$. Setting $\psi_L\dfgl\psi\!\upharpoonright_{L\oplus H}\colon L\oplus H\twoheadrightarrow H$ we get $R_{[\psi]}=\varinjlim_{L\in\F_K}((R_{(L\oplus H)})_{[\psi_L]})$ (\ref{p20}). Hence, if $(R_{(L\oplus H)})_{[\psi]}$ is integrally closed for every $L\in\F_K$ then $R_{[\psi]}$ is integrally closed (\ref{lem3}). Therefore, as $R_{(L\oplus H)}$ is simple for every $L\in\F_K$ (\ref{p50}) we can suppose that $K$ is of finite type, hence free. As $R$ is simple it is clear that $D\dfgl\degsupp(R)\subseteq G$ is a subgroup, hence $D\cap K\subseteq K$ is a subgroup, and thus $D\cap K$ is free. In both cases, our hypotheses ensure $\psi(D)\subseteq D$, so that \ref{lem50} implies $R_{[\psi]}\cong R_{(H)}[D\cap K]_{[H]}$. As $R$ is simple it is completely integrally closed, hence $R_{(H)}$ is completely integrally closed (\ref{4.400}), thus $R_{(H)}[D\cap K]_{[H]}$ is completely integrally closed (\ref{f10}), and so the claim is proven.
\end{proof}

\begin{thm}\label{4.800}
Let $\psi\colon G\twoheadrightarrow H$ be an epimorphism of groups, let $R$ be an entire $G$-graded ring, and suppose that one of the following conditions is fulfilled:
\begin{aufz}
\item[i)] $G$ is torsionfree;
\item[ii)] $\ke(\psi)$ is contained in a torsionfree direct summand of $G$ and $\langle\degsupp(R)\rangle_{\Z}=G$.
\end{aufz}
Then, $\inte(R)_{[\psi]}=\inte(R_{[\psi]})$, and $R$ is integrally closed if and only if $R_{[\psi]}$ is so.\footnote{In case i) and $H=0$ this is \cite[V.1 Exercice 25]{ac}.}
\end{thm}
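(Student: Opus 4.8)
The plan is to imitate the proof of \ref{4.600}, but instead of invoking a hypothesis on \emph{all} simple $G$-graded rings, to apply \ref{4.700} directly to the one simple $G$-graded ring that matters here, namely $Q(R)$. First I would record that in both cases $\ke(\psi)$ is torsionfree — in case i) as a subgroup of the torsionfree group $G$, in case ii) as a subgroup of a torsionfree direct summand — so that $R_{[\psi]}$ is entire by \ref{p90}~b), and $Q(R)_{[\psi]}$ sits inside $Q(R_{[\psi]})$ as its graded field of fractions (\ref{4.400}).

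The key preparatory step is to show that $Q(R)_{[\psi]}$ is integrally closed, for which I would verify that $Q(R)$ meets the hypotheses of \ref{4.700}: in case i) the group $G$ is torsionfree, and in case ii) $\ke(\psi)$ is contained in a torsionfree direct summand while $Q(R)$ has full support. The full-support claim is the one genuinely new point. Since $R$ is entire, the product of two nonzero homogeneous elements is again a nonzero homogeneous element, so $\degsupp(R)$ is a submonoid of $G$; and every nonzero homogeneous element of $Q(R)$ is a fraction whose degree is a difference of two elements of $\degsupp(R)$, whence $\degsupp(Q(R))=\langle\degsupp(R)\rangle_{\Z}=G$. Thus \ref{4.700} yields that $Q(R)_{[\psi]}$ is entire and completely integrally closed, in particular integrally closed.

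With these two facts in hand the argument proceeds exactly as in \ref{4.600}. Since $Q(Q(R)_{[\psi]})=Q(R_{[\psi]})$ (\ref{4.400}) and $Q(R)_{[\psi]}$ is integrally closed, we have
$$\inte(R_{[\psi]})=\inte(R_{[\psi]},Q(R_{[\psi]}))\subseteq\inte(Q(R)_{[\psi]},Q(R_{[\psi]}))=\inte(Q(R)_{[\psi]})=Q(R)_{[\psi]},$$
so that $\inte(R_{[\psi]})\subseteq Q(R)_{[\psi]}$ and therefore $\inte(R_{[\psi]})=\inte(R_{[\psi]},Q(R)_{[\psi]})$. Because $\psi$-coarsening commutes with relative integral closure — by \ref{a130} in case i) and by \ref{a120}~a) in case ii) — this last term equals $\inte(R,Q(R))_{[\psi]}=\inte(R)_{[\psi]}$, giving the desired equality $\inte(R)_{[\psi]}=\inte(R_{[\psi]})$. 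The equivalence then follows formally: since coarsening merely regrades the underlying ring, $R$ is integrally closed iff $R=\inte(R)$ iff $R_{[\psi]}=\inte(R)_{[\psi]}=\inte(R_{[\psi]})$ iff $R_{[\psi]}$ is integrally closed.

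I expect no serious obstacle beyond the verification encapsulated in \ref{4.700} (already carried out) together with the full-support bookkeeping needed to feed case ii) into \ref{4.700}; once $Q(R)_{[\psi]}$ is known to be integrally closed, the rest is a formal chase through \ref{4.400} and \ref{a120}/\ref{a130} parallel to \ref{4.600}.
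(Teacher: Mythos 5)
Your proposal is correct and is essentially the paper's own proof: the paper's one-line argument (``As $\degsupp(Q(R))=\langle\degsupp(R)\rangle_{\Z}$ this follows immediately from \ref{4.600}, \ref{4.700} and \ref{a120}'') is exactly your chain, namely observe $\degsupp(Q(R))=\langle\degsupp(R)\rangle_{\Z}$ so that \ref{4.700} applies to the simple ring $Q(R)$, then run the computation from the proof of \ref{4.600} using \ref{a120}/\ref{a130} for the relative closure. Your only refinement is to note explicitly that in case ii) one must reuse the \emph{argument} of \ref{4.600} applied to $Q(R)$ alone rather than its literal statement (which quantifies over all simple $G$-graded rings), which is the intended reading of the paper's citation.
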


\begin{proof}
As $\degsupp(Q(R))=\langle\degsupp(R)\rangle_{\Z}$ this follows immediately from \ref{4.600}, \ref{4.700} and \ref{a120}.
\end{proof}

\begin{qu}
Let $R$ be an entire $G$-graded ring. The above, especially \ref{bem} and \ref{4.601}, gives rise to the following questions:
\begin{aufz}
\item[a)] \textit{Let $\psi\colon G\twoheadrightarrow H$ be an epimorphism of groups such that $\ke(\psi)$ is torsionfree. Do we have $\cinte(R_{[\psi]})\cap Q(R)_{[\psi]}=\cinte(R_{[\psi]},Q(R)_{[\psi]})$?}
\item[b)] \textit{Do we have $\cinte(R[\Z]_{[G]})\cap Q(R)[\Z]_{[G]}=\cinte(R[\Z]_{[G]},Q(R)[\Z]_{[G]})$?}
\end{aufz}
If both these questions could be answered positively, then the same arguments as above would yield statements for complete integral closures analogous to \ref{f10}, \ref{4.600}, and \ref{4.800}.
\end{qu}


\textbf{Acknowledgement:} I thank Benjamin Bechtold and the reviewer for their comments and suggestions. The remarks in \ref{a140} were suggested by Micha Kapovich and Will Sawin on {\tt http://mathoverflow.net/questions/108354}. The counterexample in \ref{trans10} is due to an anonymous user on {\tt http://mathoverflow.net/questions/110998}.


\end{document}